\documentclass[11pt]{extarticle}

\usepackage[left=1in,right=1in,top=1in,bottom=1.3in]{geometry}
\usepackage{amsmath, amssymb, amsthm, latexsym, amsfonts, indentfirst, xcolor, mathtools, manfnt, microtype}
\usepackage[utf8]{inputenc}
\usepackage[english]{babel}

\numberwithin{equation}{section} % нумерация формул по разделам

\usepackage{indentfirst}
\usepackage{booktabs} 		% таблицы
\usepackage{ dsfont }

\usepackage{amsthm}

\theoremstyle{plain}
\newtheorem{thm}{Theorem}
\newtheorem{theorem}[thm]{Theorem}

\newtheorem{lem}[thm]{Lemma}

\newtheorem{obs}[thm]{Observation}

\theoremstyle{definition}

\usepackage{enumitem}

\newcommand{\FF}{\mathcal{F}}

\newcommand{\ff}{\mathcal{F}}

\newcommand{\m}{\mathcal}
\newcommand{\HH}{\mathcal{H}}
\usepackage{hyperref}

\title{Frankl's diversity theorem for permutations}
\date{}
\author{Eduard Inozemtsev\thanks{Moscow Institute of Physics and Technology;
E-mail: \url{eduard inozemtsev@bk.ru}}, Andrey Kupavskii\thanks{Moscow Institute of Physics and Technology, Saint-Petersburg State University, Innopolis University;
E-mail: \url{kupavskii@ya.ru}}}
\begin{document}
\maketitle
\begin{abstract}
    In 1987, Frankl proved an influential stability result for the Erd\H os--Ko--Rado theorem, which bounds the size of an intersecting family in terms of its distance from the nearest (subset of) star or trivial intersecting family. It is a far-reaching extension of the Hilton--Milner theorem.  In this paper, we prove its analogue for permutations on $\{1,\ldots, n\}$, provided $n$ is large. This provides a similar extension  of a Hilton--Milner type result for permutations proved by Ellis.
\end{abstract}
\section{Introduction}

We denote by $[n]:=\{ 1, \dots, n\}$ the standard $n$-element set  and by ${[n] \choose m} := \{ F \subset [n] : |F| = m \}$ the set of its $m$-element subsets. A {\it family} is a collection of sets. A family is {\it intersecting} if any two sets from the family intersect.

The famous theorem of Erd\H os, Ko and Rado  (EKR) \cite{EKR} states that for $n\ge 2m$ an intersecting family in ${[n]\choose m}$ has size at most ${n-1\choose m-1}.$ For $n>2m$ we have a series of stability results for the EKR theorem that can be conveniently formulated in terms of the diversity of a family. For a family $\FF \subset {2^{[n]}}$ we define its maximum degree $\Delta(\FF):=\max_{x \in [n]}|\FF[x]|$. Then {\it diversity} $\gamma(\FF)$ of the family $\FF$ is $\gamma(\FF)=|\FF|-\Delta(\FF).$ That is, diversity is the number of sets not containing the element of maximum degree. It can be thought of as the (edit) distance to the closest star. 

The result of Hilton and Milner \cite{HM} states that $|\m F|\le {n-1\choose m-1}-{n-m-1\choose m-1}+1$ if $\gamma(\m F)\ge 1.$ The matching example is the Frankl family $\m A_m$, where the Frankl families $\m A_k$ are defined for $k=3,\ldots, m$ below:
$$\m A_k = \Big\{F\in {[n]\choose m}:1\in F, F\cap [2,k+1]\ne\emptyset\Big\}\bigcup \Big\{F\in {[n]\choose m}: [2,k+1]\subset F\Big\}.$$
The following theorem was proved by Frankl \cite{Frankl87} in the degree form and then by Kupavskii and Zakharov \cite{KupZak18} in the diversity form.
\begin{thm}[Frankl, \cite{Frankl87}; Kupavskii and Zakharov, \cite{KupZak18}]\label{thmfrdiv}
    Let $n>2m>0$ be integers. Take an intersecting family $\FF\subset {[n]\choose m}$ with $\gamma(\FF)\geq{n-k-1 \choose m-k}$ for some real $3\leq k\leq m$. Then 
    $$|\FF|\leq{n-1 \choose m-1} + {n-k-1 \choose m-k} - {n-k-1 \choose m-1},$$
    and equality is possible only if $\FF$ is isomorphic to $\m A_k.$
\end{thm}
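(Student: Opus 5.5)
We prove Theorem~\ref{thmfrdiv} by reducing to the two-family (cross-intersecting) problem obtained by splitting $\FF$ along its element of maximum degree, and then using shifting together with the Kruskal--Katona theorem in Lov\'asz's real-valued form.

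\textbf{Setup.} Let $1$ be an element of maximum degree. Put
$$\m A=\{F\setminus\{1\}: 1\in F\in\FF\}\subset {[2,n]\choose m-1},\qquad \m B=\{F\in\FF: 1\notin F\}\subset{[2,n]\choose m}.$$
Then $|\FF|=|\m A|+|\m B|$ and $|\m B|=\gamma(\FF)\ge {n-k-1\choose m-k}$. Since $\m B$ is intersecting and $\m A,\m B$ are cross-intersecting, it suffices to bound $|\m A|$ from above by ${n-1\choose m-1}$ minus the number of $(m-1)$-sets in $[2,n]$ disjoint from some member of $\m B$. The target inequality is
$$|\m A|+|\m B|\le {n-1\choose m-1}-{n-k-1\choose m-1}+{n-k-1\choose m-k}.$$

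\textbf{Step 1 (shifting).} Apply $(i,j)$-shifts with $2\le i<j\le n$ to both families simultaneously. These shifts preserve sizes, the cross-intersecting property, and the intersecting property of $\m B$. They do not alter the degree of $1$, but they may alter other degrees and hence the diversity. This does no harm: after shifting we only use $|\m B|\ge{n-k-1\choose m-k}$, and $|\m B|$ is unchanged. So we may assume both families are shifted on $[2,n]$.

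\textbf{Step 2 (reduction to a function of $|\m B|$).} For $t=|\m B|$, define
$$f(t)=t+{n-1\choose m-1}-|\partial^{*}(t)|,$$
where $\partial^{*}(t)$ is the minimum possible number of $(m-1)$-subsets of $[2,n]$ that are disjoint from some member of an $m$-uniform family of size $t$ on $[2,n]$. By Kruskal--Katona applied to complements, this minimum is attained by the lexicographic initial segment. Taking complements in $[2,n]$ (sets of size $n-1-m$) turns "disjoint from some member of $\m B$" into a shadow condition, so $|\m A|\le{n-1\choose m-1}-|\partial^{*}(|\m B|)|$.

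For $t={n-k-1\choose m-k}$, the extremal $\m B$ is all $m$-sets containing $[2,k+1]$. The $(m-1)$-sets disjoint from some member of it are exactly the $(m-1)$-sets avoiding $[2,k+1]$, and there are ${n-k-1\choose m-1}$ of them. This gives equality with $\m A_k$.

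\textbf{Step 3 (the main obstacle).} We must show $f$ is non-increasing, or at least maximised at the left endpoint, over the admissible range: $t\ge{n-k-1\choose m-k}$, up to the limit forced by $|\m A|\ge|\m B|$ (maximum degree). This needs the real $k$ in Lov\'asz's form. Write $t={n-x-1\choose m-x}$ with real $x\le k$ and use the Lov\'asz bound $|\partial^{*}|\ge{n-x-1\choose m-1}$. It then suffices to show that
$$g(x)={n-x-1\choose m-x}-{n-x-1\choose m-1}$$
is increasing in $x$ on $[3,m]$ when $n>2m$. This is a derivative or ratio comparison: as $x$ increases by one, the second binomial drops by a factor of roughly $(n-x-m+1)/(n-x-1)$, which loses more than the first binomial does when $n>2m$.

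The delicate point is the regime of small $x$ (large $|\m B|$), where the Lov\'asz bound is weakest. There the maximum-degree constraint $|\m B|\le|\m A|$ must cap the range; otherwise Hilton--Milner type configurations with $x$ near $2$ would violate the bound. I would handle it by noting that $x<3$ forces $|\m A|<|\m B|$ via the same shadow inequality, contradicting the choice of $1$.

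\textbf{Uniqueness.} If equality holds, all inequalities are tight: $x=k$, Lov\'asz is tight, and the families are lexicographic. Tightness of Lov\'asz forces $\m B$ to be a full star of the form $\{B\supset T\}$ with $|T|=k$, and $\m A$ to be all $(m-1)$-sets meeting $T$, which is $\m A_k$. Finally, since shifting preserves equality, a standard unshifting argument shows the original $\FF$ was already isomorphic to $\m A_k$.
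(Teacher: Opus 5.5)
The paper only quotes this theorem from Frankl and Kupavskii--Zakharov and gives no proof, so your argument is judged on its own. Splitting into the cross-intersecting pair $\m A,\m B$ with a Kruskal--Katona bound is the natural start, and it is fine for $x\in[3,k]$ once monotonicity of $g$ for real $x$ is checked. Step 3, however, has a genuine gap in the regime of large diversity.

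\textbf{The claim about $x<3$ is false.} You assert that $x<3$ forces $|\m A|<|\m B|$. Take the triangle family $\{F:|F\cap[3]|\ge 2\}$, which is the formula for $\m A_k$ at $k=2$. Element $1$ has maximum degree, $|\m B|=\binom{n-3}{m-2}$ (so $x=2$), and $|\m A|=\binom{n-2}{m-2}+\binom{n-3}{m-2}>|\m B|$. Worse, this family has size exactly $\binom{n-1}{m-1}+\binom{n-4}{m-3}-\binom{n-4}{m-1}$. So the region $x<3$ contains extremal families and has to be bounded, not excluded.

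\textbf{The constraint $|\m B|\le|\m A|$ is neither available nor sufficient.} It is not implied by maximality of $\deg(1)$: in the Fano plane ($n=7$, $m=3$) we have $|\m A|=3<4=|\m B|$. Even if you add it, the properties you use cannot give the bound. For $n=21$, $m=10$, let $\m B=\{B\in\binom{[2,n]}{m}: 2\in B,\ B\cap\{3,4,5\}\neq\emptyset\}$ and $\m A=\{A\in\binom{[2,n]}{m-1}: 2\in A \text{ or } \{3,4,5\}\subset A\}$. Both are shifted, they cross-intersect, $\m B$ is intersecting, and $\binom{n-4}{m-3}=19448\le|\m B|=80938\le|\m A|=83590$. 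Yet $|\m A|+|\m B|=164528=|\m A_4|>163098=|\m A_3|$, violating the $k=3$ bound. This is just $\m A_4$ centered at $2$.

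\textbf{The missing idea.} What excludes this example is $\deg(2)\le\deg(1)$, i.e.\ $|\{B\in\m B:2\in B\}|\le|\{A\in\m A:2\notin A\}|$. Once $|\m B|$ is large, the bound from cross-intersection alone climbs to $\binom{n-1}{m-1}$ as $|\m B|\to\binom{n-2}{m-1}$. You therefore need an argument that uses $\deg(y)\le\deg(1)$ for every $y\in[2,n]$ in an essential way. Your Step 1 shifts inside $[2,n]$ can push some $\deg(y)$ above $\deg(1)$, so this information must be extracted before shifting.

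\textbf{Uniqueness at $k=3$.} Your uniqueness paragraph also fails here. Since $g(2)=g(3)$, tightness does not force $x=k$. The triangle family satisfies $\gamma=\binom{n-3}{m-2}\ge\binom{n-4}{m-3}$, attains the bound, and is not isomorphic to $\m A_3$, because their maximum degrees differ. So the uniqueness clause as quoted needs the exception $\m A_2$ at $k=3$; for $m=3$ this is the well-known second extremal family in the Hilton--Milner theorem.
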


Intersection problems are studied for other objects: permutations, graphs \cite{EFF}, partitions \cite{MeMo}, \cite{Kup54}, simplicial complexes \cite{Bor3}, \cite{Kup55} and vector spaces \cite{FG}. In this paper, we focus on permutations. 
We denote by $\Sigma_n$ the family of all permutations $[n] \rightarrow [n]$. We say that two permutations $\sigma, \pi \in \Sigma_n$ \emph{intersect} if there exists $i \in [n]$ such that $\sigma(i) = \pi(i)$. It is convenient to identify a permutation $\sigma \in \Sigma_n$ with an $n$-element subset of $[n]^2$ consisting of pairs $(x, \sigma(x))$. This way, a family of permutations can be treated as a subfamily of ${[n]^2 \choose n}$. The intersection of two permutations then becomes the usual intersection of $n$-element sets.  

Deza and Frankl \cite{DeF} initiated the study of $t$-intersecting families of permutations. They observed  that if $\FF \subset \Sigma_n$ is intersecting, then $|\FF| \leq (n-1)!$. Let us recite the simple proof of this bound below. Take the standard $n$-cycle $\pi$, $\pi(k)=k+1$, with addition modulo $n$. Let $G$ be the cyclic group of order $n$ generated by $\pi$. Take a permutation $\sigma$. Consider its left coset $\sigma G$. Any two permutations in $\sigma G$ do not intersect. Hence, $\sigma G$ contains at most $1$ permutation from $\FF$. At the same time, we have $(n-1)!$ different cosets.

This proof is simple, but it does not say anything about the case when equality $|\FF| = (n-1)!$ is achieved. Deza and Frankl conjectured that $|\FF| = (n-1)!$ if and only if $\FF$ consists of all permutations with some fixed element $(x, y)$. Note that this family has diversity $0$.  It was proved by Cameron and Ku \cite{CK} and Larose and Malvenuto \cite{LM}.  Larose and Malvenuto conjectured that the largest intersecting family permutations $\m F$ with $\gamma(\m F)\ge 1$ is isomorphic to  
\begin{equation}\label{eqhmper}\{ \sigma \} \cup \big\{\pi\in \Sigma_n: \pi(1)=1, \pi \cap \sigma\ne \emptyset\big\},\end{equation}
where $\sigma\in \Sigma_n$ is some permutation with $\sigma(1)\neq1$.  By {\it isomorphic to $\FF$} we mean that a family has the form $\pi \FF \rho$, where $\pi, \rho \in \Sigma_n$. This Hilton--Milner type result was proved by Ellis \cite{Ell} for large $n$. The results mentioned in this paragraph were proved using algebraic techniques such as Hoffman spectral bounds and representation theory of symmetric groups.

 Let $\sigma \in \Sigma_n$ be a permutation. We denote by $\Sigma_{n, \sigma, \overline{\pi}}$ the family of permutations that intersect $\sigma$ and do not  intersect $\pi$.   If $\sigma$ is the identity permutation, then by $\m{D}_n := \Sigma_{n, \overline{\sigma}}$ we denote the family of derangements. Similarly, we use $\m{D}_{n, \sigma}$ to denote permutations from $\m{D}_{n}$ that intersect $\sigma$.
 We also denote $d_n = |\m{D}_n|$. 
 Using the inclusion-exclusion principle, it is easy to see that $|\m{D}_n| = n!\sum_{i=1}^n \frac{(-1)^{i+1}}{i!}$.  It is well-known  that $|\m{D}_n| = \lceil \frac{n!}{e} \rfloor$, i.e. $|\m{D}_n|$ is the nearest integer to $\frac{n!}{e}$.

In this paper, we get an analogue of Theorem~\ref{thmfrdiv} for permutations. First, we need to find the correct analogue of the Frankl families $\m A_k$. It will be given by $\m E_k$ below.

Recall that, for a given permutation $\sigma\in \Sigma_n$, its {\it support } $S(\sigma)$ is the subset of $[n]$ of all non-fixed points of $\sigma$. That is, $S(\sigma)=\{i: \sigma(i)\ne i\}.$ 
For $k=2,\ldots, n-2$ consider a family $$\HH_k := \Sigma_{n}[\overline{(1, 1)}, \{(k+1, k+1), \dots, (n, n)\}]$$ of all permutations $\sigma$ with $S(\sigma)\cap [k+1,n]=\emptyset$ and $1\notin S(\sigma)$. (Cf. the notation in the next section.) We have $$|\HH_k| = k! - (k-1)!.$$ For a given $\m{G}\subset \Sigma_n$ define $\m{N}(\m{G}) \subset \Sigma_n[(1, 1)]$ to be the family of permutations from $\Sigma_n[(1, 1)]$ that intersect each permutation from $\m{G}$. 
% all hitting sets of $\HH$ from $S_n[(1, 1)]$).
Define 
$$\m{E}_k := \HH_k \cup \m{N}(\HH_k).$$
It is easy to see that $\HH_k$ and $\m N(\m H_k)$ are disjoint and together form an intersecting family. Also, $\gamma(\m E_k) = |\m H_k| = k!-(k-1)!.$ Note that $\m E_2$ is isomorphic to the family \eqref{eqhmper}. One can  check that  $|\m{E}_{n-3}| = |\m{E}_{n-2}|=3(n-2)!-2(n-3)!$ using the inclusion-exclusion formula (see \eqref{eqinex}). It is also not difficult to see that $|\m{E}_k|-|\m{E}_{k-1}|\ge d_{n-2}$ for $3\le k\leq n-3$. \\

The main result of this paper is given below. It provides a generalization of a result by Ellis~\cite{Ell} analogous to Theorem~\ref{thmfrdiv}.  \begin{theorem} \label{diversity}
    Let $n$ be a large enough integer. Take an intersecting family $\FF\subset \Sigma_n$ of permutations. If for some $k\in [2,n-3]$ we have $\gamma(\FF) \ge k! - (k-1)!$ then $$|\FF| \leq |\m E_k| = (n-1)! - \sum_{i = 0}^{k-1} {k-1 \choose i} d_{n-1-i} + k! - (k-1)!$$ 
    Moreover, for $k\le n-4$ equality is only possible for $\m F$ isomorphic to $\m E_k$, and for $k=n-3$ equality is only possible when $\m F$ is isomorphic to either $\m E_{n-3}$ or $\m E_{n-2}$.
\end{theorem}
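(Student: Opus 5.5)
We will write $\FF = \FF_0 \sqcup \mathcal{G}$, where the maximum degree is attained at $(1,1)$, $\FF_0=\FF[(1,1)]$ and $\mathcal{G}=\FF\setminus\FF_0$ with $|\mathcal{G}|=\gamma(\FF)\ge k!-(k-1)!$. Since $\FF$ is intersecting, $\FF_0\subset \mathcal{N}(\mathcal{G})$, so $|\FF|\le |\mathcal{G}|+|\mathcal{N}(\mathcal{G})|$. The task reduces to a purely extremal statement: among families $\mathcal{G}$ of permutations with $\sigma(1)\ne 1$ and $|\mathcal{G}|\ge k!-(k-1)!$, the quantity $|\mathcal{G}|+|\mathcal{N}(\mathcal{G})|$ is maximized (up to isomorphism fixing $(1,1)$) by $\mathcal{G}=\HH_k$. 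First we verify the formula for $|\mathcal{E}_k|$. A permutation $\pi$ with $\pi(1)=1$ fails to lie in $\mathcal{N}(\HH_k)$ iff it misses some $\sigma\in\HH_k$. For this, $\pi$ must have no fixed points in $[k+1,n]$ and, on $[2,k]$ (after fixing the behaviour on the support), avoid $\sigma$. Counting via inclusion--exclusion on the set $I\subset[2,k]$ of fixed points of $\pi$ gives the missing count $\sum_i\binom{k-1}{i}d_{n-1-i}$, which is \eqref{eqinex}.

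The key steps are as follows. (1) A \emph{degree/spread reduction}: using the coset argument from the introduction together with Ellis's Hilton--Milner theorem, we show that if $\Delta(\FF)$ is not close to $(n-1)!$ then $|\FF|\le(1-c)(n-1)!$, far below $|\mathcal{E}_k|\ge (n-1)!-d_{n-1}\cdot O(1)$. Hmm—in fact $|\mathcal{E}_k|\approx (1-1/e)(n-1)!$ only when $k=2$. In general $|\mathcal{E}_k|$ drops toward about $3(n-2)!$, so the stability input needed is stronger: a family with $|\FF|\ge 3(n-2)!$ and large $n$ should be forced to have a dominant point. For this we would invoke a known stability result (Ellis--Friedgut--Pilpel type junta approximation, or the spread approximation method of Kupavskii–Zakharov adapted to permutations). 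Such a result states that any intersecting family of size $\gg (n-2)!$ is, up to $O((n-2)!/n)$ elements, contained in a union of at most a few ``stars'' $\Sigma_n[(x,y)]$. Combined with intersection constraints, this forces almost all of $\FF$ to lie in $\Sigma_n[(1,1)]$. (2) For $\mathcal{G}$ outside the star, we bound $|\mathcal{N}(\mathcal{G})|$ by $(n-1)!$ minus the number of $\pi\in\Sigma_n[(1,1)]$ avoiding some $\sigma\in\mathcal{G}$. Each $\sigma$ alone kills about $d_{n-1}\approx (n-1)!/e$ permutations, and we need to show that the union of the killed sets has size at least $\sum_i\binom{k-1}{i}d_{n-1-i}-(|\mathcal{G}|-(k!-(k-1)!))$. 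This is a Kruskal--Katona/Hilton--Milner-type ``union of shadows'' minimization.

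(3) The minimization step: we would argue that the union of killed sets is minimized when the supports of $\sigma\in\mathcal{G}$ are as concentrated as possible. Concretely, two permutations $\sigma,\sigma'$ with $\sigma(1)\ne1$ kill overlapping sets of size roughly $d_{n-1}$ times the fraction of $\pi$ missing both, which is maximal when they agree on many coordinates. So we group $\mathcal{G}$ by the set of coordinates where elements coincide with a fixed reference pattern. A shifting/compression argument, replacing $\mathcal{G}$ by permutations fixing as many of $[k+1,n]$ pointwise as possible, should not decrease $|\mathcal{G}|+|\mathcal{N}(\mathcal{G})|$, and it terminates at subfamilies of some $\HH_{k'}$. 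Then we compare $|\mathcal{E}_{k'}|$-type values using the monotonicity $|\mathcal{E}_k|-|\mathcal{E}_{k-1}|\ge d_{n-2}$ (quoted in the introduction), plus a check that adding extra elements of $\HH_{k+1}\setminus\HH_k$ costs more in $\mathcal{N}$ than it gains. The equality cases come from tracking where the inequalities are strict; the tie $|\mathcal{E}_{n-3}|=|\mathcal{E}_{n-2}|$ explains the exceptional case.

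The main obstacle is step (3): permutations lack a clean shifting operation, so the compression must be replaced by a direct counting argument. I would first try to prove, for each $\sigma\in\mathcal{G}$ with $|S(\sigma)\setminus\{1\}|=s$, that it contributes at least about $d_{n-1}\cdot\Theta(n^{-(\text{overlap})})$ new killed permutations. This uses the fact that large $n$ makes the lower-order terms $d_{n-1-i}$ negligible relative to $d_{n-2}$. The argument would then proceed by induction on $k$.
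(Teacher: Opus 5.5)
You have the right skeleton (normalize so $(1,1)$ has maximum degree, bound $\FF[(1,1)]$ by $\m N(\mathcal G)$, argue that a concentrated $\mathcal G$ is optimal), but there are two genuine gaps.

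\textbf{The reduction is mis-stated and loses the key constraint.} Write $K$ for the number of $\pi\in\Sigma[(1,1)]$ killed by $\mathcal G$. Then $|\mathcal G|+|\m N(\mathcal G)|\le|\m E_k|$ is equivalent to $K\ge\sum_i\binom{k-1}{i}d_{n-1-i}+\bigl(|\mathcal G|-(k!-(k-1)!)\bigr)$, with a plus sign, not a minus. Every extra member of $\mathcal G$ must be paid for by an extra killed permutation.

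Relatedly, your ``purely extremal statement'' is false once you forget that $\mathcal G$ is intersecting and that no point has larger degree than $(1,1)$. For instance, $\mathcal G=\Sigma[\overline{(1,1)}]$ gives $n!-(n-1)!$. Even the intersecting family $\mathcal G=\Sigma[(2,2),\overline{(1,1)}]$ gives $|\mathcal G|+|\m N(\mathcal G)|=(n-1)!>|\m E_k|$.

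What you need is an a priori upper bound on the diversity; the paper imports Wang--Xiao, $\gamma(\FF)\le(n-2)!-(n-3)!$. Since $|\m E_{n-3}|=3(n-2)!-2(n-3)!$, this term is a constant fraction of the target. It has to be beaten with explicit constants: the paper uses a $1.2(n-2)!$ margin, plus a separate argument when $|\FF_\gamma|\ge(n-2)!/100$. Your step (1) does not supply this. An unspecified stability statement with error $O((n-2)!/n)$ ``up to a few stars'' says nothing about how $|\mathcal G|$, possibly of order $(n-2)!$, compares with the deficit. The coset/Ellis part is irrelevant once $|\m E_k|\ll(n-1)!$.

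\textbf{Step (3), the actual content of the theorem, is only announced.} No compression is needed to land in a copy of some $\HH_t$. With $X=\bigcap\mathcal G$ and $|X|=n-t$, you already have $\mathcal G\subset\Sigma[\overline{(1,1)},X]$, a copy of $\HH_t$. Also $|\mathcal G|\ge k!-(k-1)!$ forces $t\ge k$, and $t=k$ gives $\mathcal G\cong\HH_k$, hence $\FF\cong\m E_k$ by maximality.

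For $t>k$, $\mathcal G$ is a proper subfamily of that copy, so $\m N(\mathcal G)\supsetneq\m N(\HH_t)$. Comparing values of $|\m E_{k'}|$ is therefore beside the point. Moreover, $|\m E_k|$ is decreasing in $k$, since $|\m E_{k-1}|-|\m E_k|\ge d_{n-2}-k!$; the inequality you quote points the other way. What you must show is that the permutations fixing $1$, avoiding $X$ and hitting every member of $\mathcal G$, together with $|\mathcal G|$ itself, number fewer than $|\m N(\HH_k)|-|\m N(\HH_t)|\ge(t-k)d_{n-2}$.

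The paper does this with ideas absent from your sketch:
\begin{enumerate}[label=(\alph*)]
\item Because $X$ is exactly the common part, each $x\notin X$ is avoided by some member of $\mathcal G$. A permanent lower bound (Henderson's theorem, m\'enage numbers) then shows each such $x$ excludes at least $(n-2)!/e^2-O((n-3)!)$ further permutations. This settles $t\ge4k$.
\item For $t\le4k$ and $k\le\sqrt n/100$: there are no $1$-hitting sets and at most $(t-2)^i$ minimal $i$-hitting sets.
\item For $t\le4k$ and $k\ge\sqrt n/100$: an asymmetric spread approximation of the cross-intersecting pair, with $\tau_1=1.01$ and $\tau_2=k/\log^2 n$.
\item An endgame for $t\le k+35$, in which $t=n-2$ produces the second extremal family $\m E_{n-2}$.
\end{enumerate}
A per-$\sigma$ estimate $d_{n-1}\Theta(n^{-\text{overlap}})$ plus induction on $k$ does not say how the hitting part is controlled, and that is exactly where the difficulty lies.
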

In the following section, we introduce some notation and give exact and  asymptotic formulas for the size of $\m E_k$. We note that, for any $k\in[2,n-3],$ it is dominated by the $|\m N(\m H_k)|$ term.   In Section~\ref{sec3} we prove Theorem~\ref{diversity}.

A related question that was studied for sets is  how large the diversity of an intersecting family can be. Lemons and Palmer \cite{LP} proved for $n>n_0(m)$ that if $\FF \subset {[n] \choose m}$ is intersecting, then $\gamma(\FF)\leq{n-3 \choose m-2}$. Frankl \cite{FAN} improved the bound on $n$ to $n \geq 6k^2$, then Kupavskii \cite{KD} improved it to $n \geq Ck$ for some absolute constant $C$.  Recently, Frankl and Wang \cite{FWD} proved that  for $n \geq 36k$, which is currently the best known bound. 

For permutations, Wang and Xiao \cite{WX}, using spread approximations, recently proved that if $\FF \subset \Sigma_n$ is intersecting, then $\gamma(\FF) \leq (n-2)!-(n-3)!.$ (We note that the authors of the present note, while working on the proof, also got the same result via the same technique. We, however, omit it and simply rely on \cite{WX} instead.)

Our proof combines several combinatorial ingredients with the spread approximation method. An interesting aspect of this application of the spread approximation technique is that we need its highly asymmetric variant for two cross-intersecting families, with a carefully chosen set of parameters. 

\section{The family $\m E_k$} \label{sec2}
Let $\FF$ be a family of sets. For sets $X, Y$ and a family $\m S$, we use the following standard notation:
\begin{align*}
\FF(X):=&\ \{F\setminus X:X\subset F, F\in \FF\},\\
\FF[X]:=&\ \{F: X\subset F, F\in \FF\},\\
\FF[X, \overline{Y}]:=&\ \{F: X\subset F, Y \cap F = \emptyset,  F\in \FF\} ,\\
\FF[\m{S}]:=&\ \bigcup_{A\in \m{S}}\FF[A].
\end{align*}
Recall that, whenever more convenient, we treat permutations $\sigma$ as sets in $[n]^2$. That is, we have $\sigma = \{(i,\sigma(i)): i\in[n]\}$. We will also work with {\it partial permutations,} i.e., sets of the same form and size $\le n$ that could be extended to a permutation. 

For convenience in this and the  following proofs,  we introduce a partial permutation $\alpha_{i,j} := \bigl(\begin{smallmatrix}
  i & i+1 &  \cdots &  j \\
  i & i+1 &  \cdots &  j
\end{smallmatrix}\bigr)=\{ (i,i),(i+1,i+1) \dots,(j,j) \}$  and use notation like $\Sigma_{\alpha_{k+1,n}}[(1,1)]$, which stands for all permutations from $\Sigma_n$ that intersect $\alpha_{k+1,n}$ and fix $(1,1).$ We will typically omit $n$   from the subscript in the notation like $\Sigma_n$ etc.,  simply writing $\Sigma$ instead.

We will need the following formula for the size of $\m N(\m H_k)$.
\begin{lem}\label{lemsizes} We have
\begin{align} 
    \label{eqsizehk} |\m{N}(\HH_k)|&= (n-1)! - 
    \sum_{i=0}^{k-1}{k-1 \choose i}d_{n-i-1}, \\
\label{eqsizeek}|\m{E}_k| &= (n-1)! - \sum_{i = 0}^{k-1} {k-1 \choose i} d_{n-1-i} + k! - (k-1)!.
\end{align}
\end{lem}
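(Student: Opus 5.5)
Since $\m E_k=\HH_k\sqcup\m N(\HH_k)$ and $|\HH_k|=k!-(k-1)!$, formula \eqref{eqsizeek} follows from \eqref{eqsizehk} at once. So the work is in counting $\m N(\HH_k)$. We do this by counting its complement inside $\Sigma[(1,1)]$. That complement consists of permutations $\pi$ with $\pi(1)=1$ that are disjoint from at least one $\sigma\in\HH_k$.

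\textbf{Step 1: reduce to a condition on $\pi$ alone.} Every $\sigma\in\HH_k$ fixes $1$ and every point of $[k+1,n]$, and permutes $[2,k]$ with at least one moved point there. Given $\pi$ with $\pi(1)=1$, we have $\pi\cap\sigma=\emptyset$ exactly when two things hold. First, $\pi$ and $\sigma$ disagree on $[k+1,n]$, which means $\pi$ has no fixed point in $[k+1,n]$. Second, $\sigma(i)\ne\pi(i)$ for all $i\in[2,k]$. But $\pi(1)=1=\sigma(1)$ always, so the pair $(1,1)$ is common to both. Hence \emph{no} $\pi\in\Sigma[(1,1)]$ is disjoint from any $\sigma\in\HH_k$.

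This suggests the intended convention is $\HH_k$ as permutations with $\sigma(1)\neq 1$, $S(\sigma)\subset[k]$. That is, $1\in S(\sigma)$, with $\sigma$ supported on $[k]$ and fixing $[k+1,n]$. Indeed $|\{\sigma\in\Sigma_k:\sigma(1)\ne1\}|=k!-(k-1)!$, which matches. I will use this reading.

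\textbf{Step 2: characterise the complement.} Take $\pi$ with $\pi(1)=1$. Since $\sigma(1)\neq1$, the point $1$ never gives an intersection. So $\pi$ avoids some $\sigma\in\HH_k$ iff two conditions hold.
\begin{itemize}
\item[(a)] $\pi$ has no fixed point in $[k+1,n]$.
\item[(b)] Some $\sigma\in\Sigma_k$ with $\sigma(1)\ne1$ satisfies $\sigma(i)\neq\pi(i)$ for all $i\in[2,k]$.
\end{itemize}
I claim (b) is automatic for large $k$-free reasons: it asks for a permutation of $[k]$ avoiding a partial forbidden matching, where each row $i\in[2,k]$ forbids at most one value and row $1$ forbids value $1$. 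A Hall-type argument gives this for $k\ge 2$, up to a small check, e.g. $k=2$ with $\pi(2)=1$. Then $\sigma=(12)$ has $\sigma(2)=1=\pi(2)$, so it fails, and that case must be handled. I expect this bookkeeping to be the main obstacle. If (b) is not always automatic, I would instead directly count pairs by inclusion–exclusion over which points of $[k]$ are ``problematic''.

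\textbf{Step 3: count.} Assuming the complement is characterised by (a), together with a condition on $[2,k]$ that reduces via $\binom{k-1}{i}$ choices, I would count as follows. Choose a set $I\subset[2,k]$ with $|I|=i$ whose points are allowed to be fixed by $\pi$, and require all remaining points of $[2,n]$ to be non-fixed. This gives $\binom{k-1}{i}d_{n-1-i}$ permutations. Summing over $I$ counts exactly the $\pi$ fixing $1$ whose fixed points in $[2,n]$ all lie in $[2,k]$, each counted once by its exact fixed-point set. Then $|\m N(\HH_k)|=(n-1)!-\sum_{i=0}^{k-1}\binom{k-1}{i}d_{n-1-i}$.

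So the key verification is this equivalence. A $\pi$ fixing $1$ avoids some member of $\HH_k$ iff all its fixed points other than $1$ lie in $[2,k]$ (with possibly some constraint near $[2,k]$). It should be checked by constructing $\sigma$ on $[k]$ avoiding $\pi$ there, with $\sigma(1)\neq1$; the non-fixed values of $\pi$ inside $[2,k]$ need care when $\pi$ maps them outside $[k]$.
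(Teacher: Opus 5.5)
Your plan is the paper's own argument. You count the complement of $\m N(\HH_k)$ inside $\Sigma[(1,1)]$, identify it as the permutations fixing $1$ with no fixed point in $[k+1,n]$ (the paper's $\m N(\HH_k)=\Sigma_{\alpha_{k+1,n}}[(1,1)]$), and count it by the exact fixed-point set $I\subset[2,k]$, which is the paper's decomposition over $\pi\subset\alpha_{2,k}$. Your reading of $\HH_k$ is also the intended one. The notation $\Sigma_n[\overline{(1,1)},\dots]$ and the count $k!-(k-1)!$ force $\sigma(1)\neq1$, so the condition $1\notin S(\sigma)$ in the text is a slip. Likewise, $S(\pi)\cap[k+1,n]\neq\emptyset$ in the paper's proof should read: $\pi$ has a fixed point in $[k+1,n]$, the negation of your (a).

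The gap is that your proof stops exactly at the one step with content. The claim that every $\pi$ satisfying (a) avoids some member of $\HH_k$ is left as the main obstacle, Step 3 is conditional on it, and you suggest it might fail. Your suspected exception cannot occur: $\pi(1)=1$ forces $\pi(i)\neq1$ for every $i\ge2$, so $k=2$ with $\pi(2)=1$ is impossible.

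The missing step is short and needs no Hall-type argument. Take $\pi$ with $\pi(1)=1$ satisfying (a). The only constraints on $\sigma|_{[k]}$ are $\sigma(1)\neq1$ and $\sigma(i)\neq\pi(i)$ for those $i\in[2,k]$ with $\pi(i)\in[k]$. Values $\pi(i)\notin[k]$ impose nothing, since $\sigma$ maps $[k]$ into $[k]$. The forbidden pairs $\{(1,1)\}\cup\{(i,\pi(i)): i\in[2,k],\ \pi(i)\in[k]\}$ form a partial permutation of $[k]$, because $\pi$ is injective and $\pi(i)\neq1$ for $i\ge2$.

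Extend this partial permutation to a permutation $\tau$ of $[k]$, and let $c$ be a $k$-cycle on $[k]$. Put $\sigma=\tau\circ c$ on $[k]$ and $\sigma(j)=j$ for $j\in[k+1,n]$. Then $\sigma(i)=\tau(c(i))\neq\tau(i)$ for all $i\in[k]$. In particular $\sigma(1)\neq1$, so $\sigma\in\HH_k$. Also $\sigma$ avoids $\pi$ on $[k]$, and on $[k+1,n]$ it avoids $\pi$ by (a).

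So the complement is exactly $\{\pi:\pi(1)=1,\ \pi \text{ has no fixed point in } [k+1,n]\}$, with no extra condition on $[2,k]$. With this in hand, your Step 3 finishes the proof. One wording fix: $I$ must be the set $\pi$ is \emph{required} to fix (fixed pointwise on $I$, no other fixed points in $[2,n]$), not merely \emph{allowed} to fix. Only then is the count $\binom{k-1}{i}d_{n-1-i}$, as your next sentence about exact fixed-point sets already indicates.
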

\begin{proof}
    We prove \eqref{eqsizehk}. Then \eqref{eqsizeek} follows.   It is easy to see that a permutation $\pi \in \Sigma[(1, 1)]$ intersects each permutation of $\HH_k$ if and only if $S(\pi)\cap [k+1,n]\ne \emptyset.$   In our notation, we have $\m{N}(\HH_k) = \Sigma_{\alpha_{k+1,n}}[(1, 1)]$. We decompose 
\begin{equation} \label{extrcount}
|\m{N}(\HH_k)| = |\Sigma_{\alpha_{k+1,n}}[(1, 1)]| 
 = |\Sigma[(1, 1)]| - |\Sigma_{\overline{\alpha_{k+1,n}}}[(1, 1)]|.
\end{equation} We find the size of $\Sigma_{ \overline{\alpha_{k+1,n}}}[(1, 1)]$ by specifying the intersection of its permutations with  $\alpha_{2, k}$:
$$
\Sigma_{\overline{\alpha_{k+1,n}}}[(1, 1)] = \bigcup_{i=0}^{k-1} \bigcup_{\pi \subset \alpha_{2,k}, |\pi| = i} \Sigma_{\pi, \overline{\alpha_{2,n}\setminus \pi}}[(1,1)].
$$
Note that for $|\pi|=i$  we have $|\Sigma_{\pi, \overline{\alpha_{2,n}\setminus \pi}}[(1,1)]|= d_{n-i-1}$. This gives
$$
|\Sigma_{\overline{\alpha_{k+1,n}}}[(1, 1)]| = \sum_{i=0}^{k-1}{k-1 \choose i}d_{n-i-1}.
$$
Together with \eqref{extrcount} this implies \eqref{eqsizehk}.
\end{proof}

\begin{lem}\label{lemsizehk}
For $n\to \infty$ we have
\begin{align}\label{eqsizeekbound1}|\m N(\m H_k)|&=(n-2)!\cdot \Big((n-k)+O\big((n-k)^2/n\big)\Big).\\
\label{eqsizeekbound2}
|\m N(\m H_k)|&=(n-1)!\cdot\Big(1-e^{-1} \Big(1+\frac 1n\Big)^{k-1}+O(k\log^2 n/n^2)\Big).
\end{align}
\end{lem}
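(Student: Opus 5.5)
The plan is to prove the two asymptotics separately. For \eqref{eqsizeekbound1} I will count combinatorially, and for \eqref{eqsizeekbound2} I will start from the exact formula \eqref{eqsizehk} of Lemma~\ref{lemsizes}. Throughout, I use the description from the proof of Lemma~\ref{lemsizes}: $\m N(\HH_k)=\Sigma_{\alpha_{k+1,n}}[(1,1)]$, i.e.\ the permutations fixing $1$ that intersect $\alpha_{k+1,n}$, meaning they fix at least one point $j\in[k+1,n]$.

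\emph{Proof of \eqref{eqsizeekbound1}.} Write $\m N(\HH_k)=\bigcup_{j=k+1}^n \Sigma[(1,1),(j,j)]$ and apply the Bonferroni inequalities.
\begin{itemize}
\item Each set $\Sigma[(1,1),(j,j)]$ has size $(n-2)!$, and there are $n-k$ of them. So $|\m N(\HH_k)|\le (n-k)(n-2)!$.
\item Each pairwise intersection has size $(n-3)!$. Hence the loss is at most ${n-k\choose 2}(n-3)!=(n-2)!\cdot O\big((n-k)^2/n\big)$.
\end{itemize}
Since the first two Bonferroni bounds sandwich $|\m N(\HH_k)|$, \eqref{eqsizeekbound1} follows immediately.

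\emph{Proof of \eqref{eqsizeekbound2}, reduction step.} Write $d_m=m!/e+r_m$ with $|r_m|\le 1/2$. Then
$$\sum_{i=0}^{k-1}{k-1\choose i}d_{n-1-i}=\frac{(n-1)!}{e}\sum_{i=0}^{k-1}{k-1\choose i}\frac{1}{(n-1)(n-2)\cdots(n-i)}+O(2^{k}).$$
Because $k\le n$, the error $2^{k}\le 2^n$ is negligible against $(n-1)!/n^2$. It therefore suffices to show
$$\sum_{i=0}^{k-1}{k-1\choose i}\Big(\frac{1}{(n-1)\cdots(n-i)}-\frac1{n^i}\Big)=O\Big(\frac{k\log^2 n}{n^2}\Big),$$
since $\sum_i{k-1\choose i}n^{-i}=(1+1/n)^{k-1}$.

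\emph{Main estimate: comparing the two sums.} This is the step I expect to need the most care. I split the range of $i$ at $i_0=\lceil \log n\rceil$.
\begin{itemize}
\item \emph{Trivial term.} The $i=0$ term vanishes.
\item \emph{Small $i$, $1\le i\le i_0$.} Here $\prod_{j<i}(1-j/n)^{-1}-1=O(i^2/n)$. Also ${k-1\choose i}\le \frac{k}{n}\cdot\frac{n^{i}}{i!}\cdot\frac{k^{i-1}}{n^{i-1}}\le \frac{k\,n^{i-1}}{i!}$, using $k\le n$. So the $i$-th term is $O\big(\frac{k}{n}\cdot \frac{i^2}{i!\,n}\big)$. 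Summing over $i\le i_0$ gives $O(k/n^2)$, and in any case $O(k\log^2n/n^2)$.
\item \emph{Large $i$, $i>i_0$.} I use the crude bounds ${k-1\choose i}n^{-i}\le 1/i!$ and ${k-1\choose i}/((n-1)\cdots(n-i))\le 1/i!$. The latter holds since ${k-1\choose i}\le {n-1\choose i}$. Thus the tail is at most $2\sum_{i>\log n}1/i!$, which is superpolynomially small and in particular $O(n^{-3})$.
\end{itemize}
Combining the two ranges, multiplying by $(n-1)!/e$, and subtracting from $(n-1)!$ yields \eqref{eqsizeekbound2}.

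The only delicate point is choosing the cutoff for $i$. Below it, the product expansion $\prod_{j<i}(1-j/n)^{-1}=1+O(i^2/n)$ must be valid; above it, the factorial decay must already beat $n^{-2}$. The choice $i_0\approx\log n$ achieves both, and this is where the $\log^2 n$ factor in \eqref{eqsizeekbound2} comes from.
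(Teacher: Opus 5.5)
Your proposal is correct and follows essentially the paper's route: for \eqref{eqsizeekbound1} your Bonferroni sandwich is the same truncated inclusion--exclusion as the paper's formula \eqref{eqinex}, just justified more cleanly. For \eqref{eqsizeekbound2} you, like the paper, start from \eqref{eqsizehk}, replace $d_m$ by $m!/e$ at cost $O(2^k)$, and split at a logarithmic cutoff, using $1+O(i^2/n)$ below it and factorial decay above it. Two cosmetic remarks: the product should be $\prod_{j=1}^{i}(1-j/n)^{-1}$ since the denominator is $(n-1)\cdots(n-i)$, and your own estimate actually gives the sharper error $O(k/n^2)$ --- the $\log^2 n$ in the statement comes from the paper's cruder bound $i^2\le(10\log n)^2$, not from the cutoff itself.
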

The first equality is applicable for $k=(1-o(1))n,$ while the second one is applicable for, say, $k<n-n^{1/2}$. For $k=cn$ with $c\in [0,1)$ it gives $|\m N(\m H_k)|=(n-1)!\cdot (1-e^{c-1}+o(1))$.
\begin{proof}
    To get \eqref{eqsizeekbound1}, we use the representation 
    \begin{equation}\label{eqinex}|\m N(\m H_k)| = \sum_{i=1}^{n-k}(-1)^{i+1}{n-k\choose i}(n-1-i)!,\end{equation}
    which could be obtained via exclusion-inclusion in the same vein as the formula for derangements. More precisely, it is implied from the following calculations
    \begin{align*}
    |\m{N}(\m H_k)| &= \sum_{i=1}^{n-k}(-1)^{i+1}{n-k\choose i}(n-1-i)! \\&= {n-k\choose 1}(n-2)!-{n-k\choose 2}(n-3)!+\sum_{i=3}^{n-k}(-1)^{i+1}{n-k\choose i}(n-1-i)!\\
    &= \Big( (n-k)(n-2)! + O\Big((n-k)^2/2\Big)\cdot(n-3)! + O\Big((n-k)^3/6\Big)\cdot(n-4)!\Big),
    \end{align*}
    where in the last equality we used 
    $$
    {n-k\choose i+1}/{n-k\choose i}\cdot(n-2-i)!/(n-1-i)! \leq 1/(i+1).
    $$
    
     In order to obtain \eqref{eqsizeekbound2}, we use \eqref{eqsizehk} and replace $d_{n-1-i}$ by $(n-1-i)!/e$:
    % \begin{align*}
    % |\m H_k| &= (n-1)! - \frac 1e\sum_{i = 0}^{k-1} {k-1 \choose i} (n-i-1)! +O(2^k)\\
    % &= (n-1)!\cdot\Big[1-\frac 1e\sum_{i = 0}^{k-1} {k-1 \choose i} n^{-i}\cdot \Big(1+O\big(i^2/n\big)\Big)\Big].
    % \end{align*}
    % Note that the terms in the sum decay at least as  $\frac 1{i!}$, and thus for the asymptotic as in \eqref{eqsizeekbound2} all but the  the first $10\log n$ terms of the sum fall into the $O(n)$ part. For the first $10\log n$ terms, $i^2/n$ is $O(\log^2n/n)$, and for term $i=0$ there is no error. It means that we could continue the above as follows.
    %  \begin{align*}
    % |\m H_k| &= (n-1)!\cdot\Big[1-\frac 1e\sum_{i = 0}^{k-1} {k-1 \choose i} n^{-i}\cdot \Big(1+O\big(i^2/n\big)\Big)\Big]\\
    % &= (n-1)!\cdot\Big[1-\frac 1e\sum_{i = 0}^{k-1} {k-1 \choose i} n^{-i}+O(k\log^2 n/n^2)\Big]\\
    % &= (n-1)!\cdot\Big[1-\frac 1e\Big(1+\frac 1n\Big)^{k-1}+O(k\log^2 n/n^2)\Big] 
    % \end{align*}

    \begin{align*}
    |\m H_k| &= (n-1)! - \frac 1e\sum_{i = 0}^{k-1} {k-1 \choose i} (n-i-1)! +O(2^k)\\
    &= (n-1)!\left( 1 - \frac 1e\sum_{i = 0}^{k-1} {k-1 \choose i} \frac{(n-i-1)!}{(n-1)!} + O\Big(\big( 2e/n \big)^n\Big)  \right).
    \end{align*}

    Next we estimate the sum $\sum_{i = 0}^{k-1} {k-1 \choose i} \frac{(n-i-1)!}{(n-1)!}$ in the following way
    \begin{align*}
    & \sum_{i = 0}^{k-1} {k-1 \choose i} \frac{(n-i-1)!}{(n-1)!} = \sum_{i = 0}^{10\log{n}} {k-1 \choose i} \frac{(n-i-1)!}{(n-1)!} + \sum_{i = 10\log{n}+1}^{k-1} {k-1 \choose i} \frac{(n-i-1)!}{(n-1)!}\\
     & = \sum_{i = 0}^{10\log{n}} {k-1 \choose i} n^{-i}\Big( 1+O\Big(i^2/n\Big) \Big) + O\left( {k-1 \choose 10\log{n}}\frac{(n-10\log{n}-1)!}{(n-1)!} \right) \\
     & = \sum_{i = 0}^{10\log{n}} {k-1 \choose i} n^{-i}\Big( 1+O\Big(i^2/n\Big) \Big) + O\left( \Big(\frac{(k-1)e}{10\log{n}}\Big)^{10\log{n}}\cdot \Big(\frac{1}{n-10\log{n}}\Big)^{10\log{n}} \right)  \\ 
     & = \sum_{i = 0}^{10\log{n}} {k-1 \choose i} n^{-i}\Big( 1+O\left(i^2/n\Big) \right) + O\left(1/n^C\right)
    \end{align*}
    for any constant $C$, where in the first sum we used $\frac{(n-i)!}{n!} = n^{-i}\Big( 1 + O(i^2/n) \Big)$ for $i=o(\sqrt{n})$ and the second sum decreases at least as the geometrical progression with common ratio $\frac{1}{10\log{n}}$, since 
    $$
    {k-1 \choose i+1} \frac{(n-i-2)!}{(n-1)!} / {k-1 \choose i} \frac{(n-i-1)!}{(n-1)!} = \frac{k-1-i}{(i+1)(n-i-1)}\leq \frac{1}{10\log{n}}.
    $$

    We continue the above as follows, noting that for term $i=0$ there is no error, 
    \begin{align*}
    &  \sum_{i = 0}^{10\log{n}} {k-1 \choose i} n^{-i}\Big( 1+O\Big(i^2/n\Big) \Big) = \sum_{i = 0}^{10\log{n}} {k-1 \choose i} n^{-i}+\sum_{i = 1}^{10\log{n}} {k-1 \choose i} n^{-i}\cdot O\Big(i^2/n\Big) \\
    & = \sum_{i = 0}^{10\log{n}} {k-1 \choose i} n^{-i}+O(\log^2{n}/n)\cdot O(k/n) = \sum_{i = 0}^{k-1} {k-1 \choose i} n^{-i} + O\Big(k\log^2{n}/n^2\Big),
    \end{align*}
    where in the second equality we used $i\leq 10\log{n}$ and $\sum_{i = 1}^{10\log{n}} {k-1 \choose i} n^{-i} = O(k/n),$ and in the last inequality we again used that $\sum_{i = 10\log{n}+1}^{k-1} {k-1 \choose i} n^{-i} = O(1/n^C).$

    Finally, we have 
    \begin{align*}
    |\m H_k| &=  (n-1)!\cdot\Big[1-\frac 1e\sum_{i = 0}^{k-1} {k-1 \choose i} n^{-i}+O(k\log^2 n/n^2)\Big]\\
    &= (n-1)!\cdot\Big[1-\frac 1e\Big(1+\frac 1n\Big)^{k-1}+O(k\log^2 n/n^2)\Big]. 
    \end{align*}
\end{proof}

From here after, we replace $d_{n}$ by $n!/e$, since 
$$
\sum_{i=0}^m {m \choose i}d_{n-i} = \sum_{i=0}^m {m \choose i}(n-i)!/e + O(2^m)
$$
and the big-O term is negligible.

\section{Proof of Theorem \ref{diversity}}\label{sec3}

Permutations from $\Sigma_n[\overline{(1, 1)}]$ can be viewed as $(n-2)$-partial permutations $[2,n] \to [2,n]$: for $\sigma \in \Sigma_n[\overline{(1, 1)}]$,  delete $(1, \sigma(1))$ and $(\sigma^{-1}(1), 1),$ and get an $(n-2)-$partial permutation. Conversely, such an $(n-2)$-partial permutation can be uniquely extended to a permutation in $\Sigma_n[\overline{(1, 1)}]$.  Note that $$\Big|\bigcap \HH_k\Big| = n-k$$ as permutations and also as partial permutations. (Here and in what follows, $\bigcap \m X$ is a shorthand for $\bigcap_{X\in \m X}X$.) Note, that we want to maximize $\m{N}(\m{G})$ for given $\m{G}$. Therefore, if $X = \bigcap \HH_k$ contains elements $(1, i)$ or $(i, 1)$, then permutations from $\Sigma[(1,1)]$ can not intersect $X$ by these elements. Hence, w.l.o.g. we may assume that $\bigcap \HH_k \subset [2, n]^2$.

Let $\FF \subset \Sigma_n=\Sigma$ be the largest intersecting family of permutations satisfying the conditions of Theorem~\ref{diversity}. W.l.o.g. assume that $(1, 1)$ has the largest degree in $\FF$: i.e., that $|\FF[(1,1)]| = \Delta(\FF)$. We decompose  $$\FF = \FF_\Delta  \sqcup \FF_\gamma,$$   where $\FF_\Delta = \FF[(1, 1)] $ is the degree part and $\FF_\gamma = \FF[\overline{(1, 1)}]$ is the diversity part. 
%Since we treat $\Sigma_n[\overline{(1, 1)}]$ as $(n-2)$-partial permutations $[2,n] \to [2,n]$, we have  $\bigcap \FF_\gamma \subset [2,n]^2.$ We also have 
We have $|\FF_\gamma|= \gamma(\FF) \geq k! - (k-1)!$ by the assumption of the theorem.  However, a bulk of the family is $\ff_\Delta.$ In the proof, we use the aforementioned result of Wang and Xiao \cite{WX} that for $n$ large enough and any intersecting $\FF \subset \Sigma_n$ we have  $$|\FF_\gamma|\leq(n-2)!-(n-3)!$$ Compared with the bounds from Lemma \ref{lemsizehk} on the size of $\m E_k$, it is typically a negligible part.

Put $X:=\bigcap \FF_\gamma$  and suppose that $|X| = n-t$, for some $t$.  That is, using $|\FF_\gamma| \geq k! - (k-1)!$, we have 
$$\Big|\bigcap \FF_\gamma\Big| = n - t \leq n - k.$$ If $t=k$, equivalent to $|X|=n - k$, then we have $|\Sigma[\overline{(1,1)},X]|= k! - (k-1)!$. We have $\ff_\gamma\subset \Sigma[\overline{(1,1)},X]$. The lower bound on the size of $\ff_\gamma$ implies that  $\ff_\gamma = \Sigma[\overline{(1,1)},X],$ which in turn is  isomorphic to $\m H_k.$ This and the fact that $\ff$ is inclusion-maximal implies that $\ff$ is isomorphic to $\m E_k.$

In what follows, we assume that $|X|= n - t < n - k$. For simplicity and w.l.o.g., in what follows we assume that $X = \alpha_{t+1, n}$. The equality $\bigcap \FF_\gamma = \alpha_{t+1, n}$ implies that for any $x \in [2,t]^2$ there exists a partial permutation $\pi$ from $\FF_\gamma$ such that $x \notin \pi$.

Our goal is to show that for $t>k$ we have $|\FF| < |\m{E}_k|$.  We distinguish three cases.

    \subsection{\textbf{Case 1.} $t \geq 4k$}
    We begin with the following lemma.
\begin{lem} \label{doubleder}
    Let $\sigma, \pi$ be two  
    partial permutations on $[n]$, and suppose that 
    $(x_1, y_1), (x_2, y_2) \notin \sigma, \pi$ and $\{(x_1,y_1),(x_2,y_2)\}$ is a partial permutation of size $2$. 
    Then the number of permutations avoiding both $\sigma$ and $\pi$, but containing $(x_1,y_1)$ and $(x_2,y_2)$ satisfies
    $$|\Sigma_{\overline{\sigma}, \overline{\pi}}[(x_1, y_1), (x_2, y_2)]| \geq \frac{(n-2)!}{e^2} - \frac{2(n-3)!}{e^2}.$$ %Also, if $\sigma$ and $\pi$ are two partial permutations, then the same conclusion holds. 
\end{lem}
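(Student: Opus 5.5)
The plan is to reduce the statement to counting permutations of an $(n-2)$-element set that avoid two fixed permutations, and then control that count with an inclusion--exclusion (rook-number) argument.

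\textbf{Reduction.} Put $m:=n-2$. Fixing $(x_1,y_1)$ and $(x_2,y_2)$, the permutations in question correspond bijectively to bijections $\rho\colon [n]\setminus\{x_1,x_2\}\to[n]\setminus\{y_1,y_2\}$ that avoid the restrictions $\sigma',\pi'$ of $\sigma,\pi$ to this $m\times m$ board. The cells of $\sigma$ and $\pi$ lying in rows $x_1,x_2$ or columns $y_1,y_2$ are irrelevant: the pairs $(x_i,y_i)$ themselves are not in $\sigma,\pi$, and any other cell in those rows or columns is automatically avoided.

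Each of $\sigma',\pi'$ is a partial bijection of the board. Enlarging the forbidden set can only decrease the count, so I extend $\sigma',\pi'$ arbitrarily to full bijections $\sigma'',\pi''$ of the board. After relabelling columns I may take $\sigma''=\mathrm{id}$ and $\pi''=\tau$. The task becomes showing that
\[N:=\#\{\rho\in\Sigma_m:\ \rho(i)\neq i,\ \rho(i)\ne\tau(i)\ \forall i\}\]
satisfies $N\ge m!/e^2-2(m-1)!/e^2$.

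\textbf{Worst case of $\tau$.} If $\tau$ has fixed points, the forbidden set $U=\mathrm{id}\cup\tau$ is strictly smaller. A cleaner route is to bound $N$ via rook numbers for a general $\tau$ anyway. Write
\[N=\sum_{j\ge 0}(-1)^j r_j (m-j)!,\]
where $r_j$ is the number of $j$-element partial permutations contained in $U$. Every row and every column of $U$ contains at most $2$ cells, and $|U|\le 2m$. From this I will prove two-sided estimates of the form
\[\binom{2m}{j}-\frac{c\,j^2}{m}\binom{2m}{j}\ \le\ \frac{r_j}{\,?\,}\ \ldots,\qquad\text{more precisely}\qquad \frac{2^j m^j}{j!}\Big(1-\frac{c j^2}{m}\Big)\le r_j\le \frac{(2m)^j}{j!}.\]
The upper bound comes from choosing $j$ cells out of $2m$. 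The lower bound comes from choosing the cells one at a time: each chosen cell kills at most $3$ cells in its row and column, since each line contains at most $2$ cells of $U$. Combined with $(m-j)!/m!=m^{-j}(1+O(j^2/m))$, the $j$-th term equals $\pm\frac{2^j}{j!}\,m!\,(1+O(j^2/m))$.

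\textbf{Summation.} Summing over $j$, the main terms give $m!\sum_j(-2)^j/j!\to m!e^{-2}$. The tail $j>10\log m$ is negligible exactly as in Lemma~\ref{lemsizehk}. The corrections are of total size $O\big((m-1)!\sum_j j^2 2^j/j!\big)=O((m-1)!)$. This yields $N\ge m!/e^2-C(m-1)!$ for some constant $C$.

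\textbf{Main obstacle.} The hard step is obtaining the explicit constant $2/e^2$ rather than an unspecified $C$. I would attack it by computing the $1/m$-term of $r_j(m-j)!/m!$ exactly. In the extremal disjoint case $r_j=\sum_{a+b=j}(\text{compatible pairs})$, and the correction is $-\binom{j}{2}\cdot(\text{const})/m$. The resulting series $\sum_j(-2)^j\binom{j}{2}/j!=2e^{-2}$ has to be matched against the $j^2/m$ correction in $(m-j)!$. If the exact bookkeeping becomes messy, I would instead pair consecutive terms $j$ and $j+1$ (Bonferroni style) to get one-sided errors, and absorb the slack using that $n$ is large.
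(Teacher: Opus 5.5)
\textbf{There is a genuine gap: the explicit constant $2/e^2$ is never established.} Your reduction to the $m\times m$ board ($m=n-2$) and the expansion $N=\sum_j(-1)^j r_j(m-j)!$ are fine. For fixed-point-free $\tau$, your estimates give $N=m!/e^2+O((m-1)!)$. The lemma, however, asserts the specific constant $2/e^2$. Your bounds $\frac{(2m)^j}{j!}(1-cj^2/m)\le r_j\le \frac{(2m)^j}{j!}$ carry a relative error of order $j^2/m$. That is exactly the order of the $(m-1)!$ term you need to control, so no bookkeeping with them can yield an explicit constant. The step you call the ``main obstacle'' is precisely what the statement requires, and it is only sketched.

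To close it you need a second-order Bonferroni count. If $U$ has exactly two cells in every line, it contains exactly $2m$ pairs of cells sharing a line and $O(m^2)$ pairs of such pairs. Hence $r_j=\frac{(2m)^j}{j!}\bigl(1-\frac{3}{2m}\binom j2+O(j^4/m^2)\bigr)$. Combine this with $(m-j)!=\frac{m!}{m^j}\bigl(1+\frac1m\binom j2+O(j^4/m^2)\bigr)$ and your identity $\sum_j(-2)^j\binom j2/j!=2e^{-2}$. This gives $N=\frac{m!}{e^2}-\frac{(m-1)!}{e^2}+O(m!/m^2)$, which beats the required bound for large $m$ and matches $U_m\sim\frac{m!}{e^2}\bigl(1-\frac{1}{m-1}+\cdots\bigr)$. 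The paper avoids all of this: it cites Henderson's theorem that a $(0,1)$-matrix with at most two zeros per line has permanent at least the m\'enage number $U_m$ (or $U_m-1$), and then uses the known expansion of $U_m$.

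\textbf{Your lower bound on $r_j$ also fails for general $\tau$.} It requires $|U|=2m$, i.e.\ $\tau$ without fixed points, yet you explicitly chose to treat general $\tau$. For $\tau=\mathrm{id}$ one has $r_j=\binom mj$, so the bound is false there. Adding forbidden cells does not fix this directly. Extending $\sigma'$ and $\pi'$ separately keeps every cell of $\sigma'\cap\pi'$ as a fixed point of $\tau$. Even enlarging $U$ as a whole can get stuck, e.g.\ when $U$ is a two-per-line board on $m-1$ rows and columns plus one isolated cell. What does hold is this: greedily adding cells between deficient rows and columns always reaches a board with at most two cells per line and $|U|\ge 2m-1$. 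You would then also have to run the second-order computation for $|U|=2m-1$, where the main term $m!e^{-(2m-1)/m}$ gains about $(m-1)!/e^2$. Alternatively, make the estimates uniform in $f=2m-|U|$.

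Note that the weaker bound $m!/e^2-C(m-1)!$ would suffice for every application in the paper, since the error is always absorbed into $O((n-3)!)$ terms. It does not, however, prove the lemma as stated.
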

\begin{proof}
    For simplicity, we assume that $(x_1,y_1) = (1,1)$ and $(x_2,y_2) = (2,2)$. We lower bound  $|\Sigma_{\overline{\sigma}, \overline{\pi}}[(1,1), (2, 2)]|$ using the permanent of the corresponding matrix. First, we form disjoint partial permutations $\sigma',\pi'$, where $\sigma'=\sigma\cap ([3,n]\times [3,n])$ and $ \pi'=(\pi\setminus\sigma)\cap ([3,n]\times [3,n])$ is defined analogously. Consider the $(0,1)$-matrix $A$ with rows and columns indexed by $[3,n]$ and with $0$'s exactly on positions corresponding to the elements of  $\sigma',\pi'$. This matrix has at most two zeros in each row and each column. Henderson proved \cite{He} that the minimum possible permanent of such matrix equals to the $(n-2)$-th menage number $U_{n-2}$ for $n$ even (and $U_{n-2} - 1$ for $n$ odd). We have  $$U_n \sim \frac{n!}{e^2}\Big(1 - \frac{1}{n-1}+\frac{1}{2!(n-1)(n-2)}-\dots\Big)\ge \frac{n!}{e^2} \cdot\frac{n-2}{n-1}  $$ for large enough $n$,  see \cite{MN}.

\end{proof}

%    $ \begin{aligned}
 %       |\Sigma_{n, \overline{\sigma}, \overline{\pi}}[(x_1, y_1), (x_2, y_2)]|  \geq & (n-4)^{n-2}\frac{(n-2)!}{(n-2)^{n-2}}  = (n-2)!\left(1 - \frac{2}{n-2}\right)^{n-2} \\ \geq (n-2)!e^{(n-2)\ln{\left(1 - \frac{2}{n-2}\right)}} & \geq (n-2)!e^{(n-2)(-\frac{2}{n-2}-\frac{4}{(n-2)^2})} \geq (n-2)!e^{-\frac{4}{n-2}} \\ \geq \frac{(n-2)!}{e^2}\left(1 - \frac{4}{n-2}\right).
 %   \end{aligned}$
    
 %   (The same bound may be derived by noting that $|\Sigma_{n, \overline{\sigma}, \overline{\pi}}|$ equals to the permanent of $(0,1)$-matrix with exactly two zeros in every row and column, which equals to $n$ -th menage number $U_n$.  And the asymptotics of $U_n$ is $U_n \sim \frac{n!}{e^2}(1 - \frac{1}{n-1}+\frac{1}{2!(n-1)(n-2)}-\dots)$ \cite{MN}.) 

%    The second statement holds because we can extend partial permutations to full permutations.

We want to lower bound the size of $\Sigma[(1, 1)]\setminus \ff_{\Delta}$. Take any $\sigma \in \FF_\gamma$. Obviously, $\ff_{\Delta}\cap \Sigma_{\overline{\sigma}}[(1,1)]=\emptyset.$ But we could say more. 
For any $x  \in \sigma$ there exists $\rho_x \in \FF_\gamma$ such that $x \notin \rho_x$, because $\bigcap \FF_\gamma = \alpha_{t+1, n}$ by our assumption. The family $\ff_\Delta$ cannot contain permutations from $\Sigma_{\overline{\sigma \setminus x}, \overline{\rho_x}}[(1, 1), x]$, since they do not intersect $\rho_x$. %Notice that $\rho_x \setminus \sigma$ and $\sigma \setminus x$ are disjoint partial permutations. 
By Lemma \ref{doubleder} we have $$|\Sigma_{\overline{\sigma\setminus x}, \overline{\rho_x}}[(1, 1), x]| \geq \frac{(n-2)!}{e^2} - \frac{2(n-3)!}{e^2}.$$
Note, that $\Sigma_{\overline{\sigma\setminus x}, \overline{\rho_x}}[(1, 1), x] \cap \Sigma_{\overline{\sigma\setminus y}, \overline{\rho_y}}[(1, 1), y] = \emptyset$. Therefore, summing up over all $x \in \sigma \cap [2, t]^2$, we have
\begin{align}
\notag |\Sigma[(1,1)]|-|\ff_\Delta| &\geq  |\m D_{\sigma}[(1,1)]| +(t-2)\frac{(n-2)!}{e^2} - (t-2)\frac{4(n-3)!}{e^2}\\
\label{eqcompare1}&\geq \frac {(n-1)!}e \Big(1+ \frac {t}{en}+O(t/ n^{2})\Big). 
\end{align}
Using \eqref{eqsizeekbound2}, we bound 
\begin{align}
\notag|\Sigma[(1,1)]|-|\m N(\m H_k)|&= (n-1)!\cdot \Big(e^{-1}\Big(1+\frac 1n\Big)^{k-1}+O(k\log^2 n/n^2)\Big) \\
\label{eqcompare2}&\le \frac{(n-1)!}e\Big(e^{k/n}+O(k\log^2 n/n^2)\Big)
\end{align}
Taking the difference of the RHS of \eqref{eqcompare1} and that of RHS \eqref{eqcompare2}, we get $$|\m N(\m H_k)|-|\m F_\Delta|\ge \frac{(n-1)!}e \Big(1+\frac t{en}-e^{k/n}+O(k\log^2 n/n^2+t/n^2)\Big).$$
We have $t \geq 4k$, and so $k \leq \frac{n}{4}$. In particular, $e^{k/n}\le 1+1.2 \frac kn\le 1+\frac t{3n}$ in this range. Thus, we get $$|\m N(\m H_k)|-|\m F_\Delta|\ge (n-1)!\cdot  \frac t{C n}$$
for some absolute $C>0$ and for $n$ large enough.
Consequently, 
$$
|\m{E}_k|-|\FF| \ge |\m{N}(\HH_k)|  - |\FF_\Delta|- |\FF_\gamma| \ge (n-1)!\cdot  \frac t{C n} - |\FF_\gamma|.
$$
If $t>4C$ then the last expression is strictly positive since  $|\FF_\gamma| \leq (n-2)!-(n-3)!$ from \cite{WX}. The theorem follows. If $t<4C$ then $|\ff_\gamma|\le t! = o((n-2)!)$ and we are done again.

\subsection{\textbf{Case 2.} $t\leq 4k, k \leq \frac{\sqrt{n}}{100}$} 

     Recall that $\bigcap \FF_\gamma = \alpha_{t+1, n}$ and $\bigcap \HH_k = \alpha_{k+1, n}$. Thus, both $\ff_\Delta$ and $\m N(\m H_k)$ contain   $\Sigma_{\alpha_{t+1,n}}[(1,1)] $.  Hence, we compare the remainders, $|\m{N}(\HH)\cap \Sigma_{\overline{\alpha_{t+1,n}}}[(1, 1)]|=|\Sigma_{\alpha_{k+1,t}, \overline{\alpha_{t+1,n}}}[(1, 1)]|$ and $|\FF_\Delta \cap \Sigma_{ \overline{\alpha_{t+1,n}}}[(1, 1)]|.$

In order to upper bound $|\FF_\Delta \cap \Sigma_{ \overline{\alpha_{t+1,n}}}[(1, 1)]|$, we count the number of hitting sets of different sizes for the family of partial permutations $\FF_\gamma(\alpha_{t+1, n})$. Recall that  $\FF_\gamma(\alpha_{t+1, n}) = \{ \pi \setminus \alpha_{t+1, n} \mid \pi \in  \FF_\gamma\}$ and that a {\it hitting set} for $\m X$ is a set that intersects all sets from $\m X$.  

Note that each permutation in  $\FF_\Delta \cap \Sigma_{ \overline{\alpha_{t+1,n}}}[(1, 1)]$ corresponds to a hitting set of size at most $t-1$ for $\FF_\gamma(\alpha_{t+1, n})$. We count inclusion-minimal hitting sets only. If $\m B$ is the  family of all such inclusion-minimal hitting sets, then, by maximality of $\ff,$
$$\FF_\Delta\cap \Sigma_{ \overline{\alpha_{t+1,n}}}[(1, 1)]= \Sigma_{ \overline{\alpha_{t+1,n}}}[(1,1), \m B].$$
%of size $i$ (we call them $i-$hitting sets) from $[2,t]^2$ for $i\in[2,t-2]$. Note that, since $\bigcap \FF_\gamma = \alpha_{t+1, n}$, for each element $x \in [2,t]^2$ there is a permutation that does not contain $x$, so there are no $1$-hitting sets.  If $\m{A}$ is a family of all such hitting sets, then $|\m{N}(\FF_\gamma) \cap \Sigma_{n, \overline{\alpha_{t+1,n}}}[(1, 1)]| \leq |\Sigma_{n}[(1,1), \m{A}]|$ But we need only \emph{minimal} hitting sets (hitting set is called minimal, if it does not contain hitting set as a subset). That is because if $X \subset Y$ are two hitting sets, then $\Sigma_{n}[(1,1), Y] \subset \Sigma_{n}[(1,1), X]$. 
The following lemma in the spirit of Erd\H os and Lov\`asz provides the necessary bound.  
\begin{lem} \label{hitsets}
    The family  $\FF_{\gamma}(\alpha_{t+1,n})$ has no $1$-hitting sets. The number of minimal $i$-hitting sets from $[2,t]^2$  for the family $\FF_{\gamma}(\alpha_{t+1,n})$ is at most $(t-2)^i$ for each $i \in[2,t-1]$.
\end{lem}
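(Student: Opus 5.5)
The first claim follows directly from the definition of $X$. A $1$-hitting set $\{x\}$ with $x\in[2,t]^2$ would be an element contained in every member of $\FF_\gamma(\alpha_{t+1,n})$. That would place $x$ in $\bigcap\FF_\gamma=\alpha_{t+1,n}$, which is impossible because $x\notin\alpha_{t+1,n}$. Elements involving the row or column $1$ are excluded by the reduction to $[2,n]^2$ made at the start of Section~\ref{sec3}.

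For the counting bound I will use the standard Erd\H os--Lov\'asz branching argument, adapted to exploit the absence of $1$-hitting sets. We build any minimal hitting set $B$ of size $i$ through a search tree of depth $i$. At each node we hold a partial set $B_j$ of size $j<i$. Since $B$ is minimal, $B_j$ is not itself a hitting set, so some member $\pi\in\FF_\gamma(\alpha_{t+1,n})$ avoids $B_j$. We fix a canonical such $\pi$, for instance the first one in a fixed order. Because $B$ must meet $\pi$, we branch on an element $y\in\pi\cap B$ and set $B_{j+1}=B_j\cup\{y\}$. Each $\pi$, viewed as a partial permutation on $[2,t]$ after removing $\alpha_{t+1,n}$ and the two entries touching $1$, has at most $t-1$ elements in $[2,t]^2$. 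This alone gives $(t-1)^i$ leaves, and every minimal $i$-hitting set is reached at some leaf.

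To improve each factor to $t-2$, I will choose the branching member more carefully so that it has at most $t-2$ admissible elements. Members of $\FF_\gamma$ are permutations with $\sigma(1)\ne1$, so after deleting the $\alpha_{t+1,n}$ part each has exactly $t-2$ elements inside $[2,t]^2$. The reason is that the entries $(1,\sigma(1))$ and $(\sigma^{-1}(1),1)$ consume one row and one column of $[2,t]$ outside the square, which leaves $t-1-1=t-2$ pairs. This is the same reduction that turns $\Sigma[\overline{(1,1)}]$ into $(n-2)$-partial permutations. Hence every branching step has at most $t-2$ choices, and the number of leaves is at most $(t-2)^i$.

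The main point requiring care is whether $\FF_\gamma$ might contain members lying entirely in $[2,t]^2$ with $t-1$ elements there. They cannot, since $(1,1)\notin\sigma$ forces $1$ to map elsewhere. I also need to handle minimal hitting sets that include elements outside $[2,t]^2$. The lemma counts only those inside $[2,t]^2$, and the branching restricted to $\pi\cap[2,t]^2$ covers them. The first step also uses that $\{y\}$ alone is never a hitting set, which is guaranteed by the first claim. That claim is also why $i$ starts at $2$.
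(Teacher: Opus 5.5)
Your proposal is correct and follows the paper's argument. The absence of $1$-hitting sets comes from $\bigcap\FF_\gamma=\alpha_{t+1,n}$, and the count comes from the same Erd\H{o}s--Lov\'asz branching, where each non-hitting partial set is avoided by some member having exactly $t-2$ entries in $[2,t]^2$. Your canonical choice of the avoiding member and your explicit derivation of the $t-2$ bound make precise what the paper leaves implicit.
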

\begin{proof}
    Take  an arbitrary $\rho_0\in \FF_\gamma(\alpha_{t+1, n})$. Any hitting set $H$ contains an element from $\rho_0, $ say $x_1$. Take a permutation from $\FF_\gamma(\alpha_{t+1, n})$ that avoids $x_1$. We see that, first, there are no $1-$hitting sets and, second, any $(\ge 2)$-hitting set contains an element from $\rho_1$.  Given that $|\rho_0|,|\rho_1| \leq t-2$ (as a partial permutation on $[2,t]$),  the number of $2-$hitting sets is at most $(t-2)^2$. 
    
    We proceed inductively. For each prospective partial hitting set $x_1,\ldots, x_i$ that is not yet a hitting set, there is a permutation $\rho_i$ that avoids all of $x_1,\ldots, x_i$.  Any inclusion-minimal $(i+1)-$hitting set $H$ extending $x_1,\ldots, x_i$ must contain an element $x_{i+1} \in \rho_i$. By induction, the number of partial sets $x_1,\ldots, x_i$ is at most $(t-2)^i,$ and there are $(t-2)$ options for $x_{i+1}.$ Moreover, each inclusion-minimal $(i+1)$-hitting set must arise this way. Thus, there are at most $(t-2)^{i+1}$ inclusion-minimal $(i+1)$-hitting sets. 
\end{proof}
Define $\m B$ to be the family of such hitting sets. Note that the number of sets from $\Sigma_{ \overline{\alpha_{t+1,n}}}[(1,1)]$ containing an $i$-hitting set $B\in \m B$ is equal to 
$$\sum_{j=i}^{t-i-1}{t-i-1\choose j-i}d_{n-j-1},$$
by running the same argument as in Lemma~\ref{lemsizes}. This and the previous  lemma imply
\begin{align} 
\notag|\FF_\Delta \cap \Sigma_{\overline{\alpha_{t+1,n}}}[(1, 1)]| &\leq |\Sigma_{\overline{\alpha_{t+1,n}}}[(1, 1),\m B]|\\
\notag &\le \sum_{i=2}^{t-1} (t-2)^i \sum_{j=i}^{t-i-1}{t-i-1\choose j-i}d_{n-j-1} \notag \\
\notag &\le e^{-1}\sum_{i=2}^{t-1} \sum_{j=i}^{t-i-1}t^j (n-j-1)! \notag \\
&\le t^2\frac{(n-3)!}{e} + 2t^3(n-4)!+3t^4(n-5)!+\dots \notag \\
&\leq t^2\frac{(n-3)!}{e} + 2t^2/25 \cdot \sqrt{n}(n-4)!+3t^2/25^2 \cdot(\sqrt{n})^2 (n-5)!+\dots  \notag \\
\label{eqcompare3} &\leq t^2\frac{(n-3)!}{e}+ t^2 \sqrt{n}(n-4)!,
\end{align}
where we used $t \leq 4k \leq \frac{\sqrt{n}}{25}.$

At the same time,  we bound $|\m{N}(\HH_k)\cap \Sigma_{ \overline{\alpha_{t+1,n}}}[(1, 1)]|=|\Sigma_{ \alpha_{k+1,t}, \overline{\alpha_{t+1,n}}}[(1, 1)]|$ as follows
    \begin{align} \label{hitsetsbounds}
     \notag   |\m{N}(\HH_k)\cap \Sigma_{ \overline{\alpha_{t+1,n}}}[(1, 1)]| &= |\Sigma_{\overline{\alpha_{t+1,n}}}[(1, 1)]| - |\Sigma_{\overline{\alpha_{k+1,n}}}[(1, 1)]|\\
     &=   |\m{N}(\HH_t)|- |\m{N}(\HH_k)|\\
     \notag &\overset{\eqref{eqsizehk}}{=} \sum_{i = 0}^{t-1} {t-1 \choose i} d_{n-i-1} - \sum_{i = 0}^{k-1} {k-1 \choose i} d_{n-i-1} \\
     \label{Hbound}    &\geq (t-k)\frac{(n-2)!}{e}.
    \end{align}
%where we used $||S_{n, \overline{\alpha_{t+1,n}}}[(1, 1)]|$ and $|S_{n, \overline{\alpha_{k+1,n}}}[(1, 1)]|$ as in Lemma~\ref{lemsizes}.

Finally, compare the expressions in the RHS of \eqref{Hbound} and \eqref{eqcompare3}. %We have $(t-k)\frac{(n-2)!}{e} > t^2\frac{(n-3)!}{e}+ t^2 \sqrt{n}(n-4)!$ for $t\le \sqrt n/25$. from the inequality above.
We have 
$$
(t-k)\frac{(n-2)!}{e}-t^2\frac{(n-3)!}{e}-t^2 \sqrt{n}(n-4)! \ge \frac{(n-2)!}{e}\left(t-k-\frac{2t^2}{n-2}\right)\geq \frac{(n-2)!}{e}\left(1-\frac{1}{10}\right).
$$
where we used that $t\le \sqrt n/25$ and $k < t$. It is clear that the RHS of the displayed chain of inequalities is bigger than $|\ff_\gamma|,$ since in the case $t \leq \frac{\sqrt{n}}{25}$ we have $|\FF_\gamma|\leq (n-t)!\leq \Big(n - \frac{\sqrt{n}}{25}\Big)!$. Thus we conclude that $|\ff|<|\m E_k|$ again.

%%%%%%%%%%%%%%%%%%%%%%%%%%%%%%

\subsection{\textbf{Case 3.} $k\ge \sqrt n/100$, $t \leq 4k$}
As in Case 2, we compare the sizes of $\m{N}(\HH_k)\cap \Sigma_{\overline{\alpha_{t+1,n}}}[(1, 1)]=\Sigma_{\alpha_{k+1,t}, \overline{\alpha_{t+1,n}}}[(1, 1)]$ and $\FF_\Delta \cap \Sigma_{ \overline{\alpha_{t+1,n}}}[(1, 1)].$ 
From now on, we treat permutations from $\FF_{\gamma}$ as $(t-2)-$partial permutations $[2,t] \rightarrow [2,t]$, and permutations from $\FF_\Delta$ as permutations $[2, n] \rightarrow [2, n]$. 

To prove the theorem in this case, we shall use the spread approximation technique, introduced by the second author and Zakharov \cite{KuZa} and developed in a series of recent works of the second author and coauthors \cite{Kupavskii26,
Kupavskii24Perm,
FranklKup25,
IarKup25,
KupNos25}.

We say that a family $\FF \subset  {[n] \choose k}$ is $r$-spread for some $r\geq 1$ if $\frac{|\FF(X)|}{|\FF|} \leq r^{-|X|}$ for any $X \subset [n].$ The following statement is a variant due to Stoeckl \cite{St}   of  the breakthrough result that was proved  by Alweiss, Lovett, Wu and Zhang \cite{Alw}. It is central for the spread approximation method.

\begin{thm} [Spread Lemma, \cite{Alw}] \label{spreadtheorem}
    If for some $n, k, r, m \geq 1$ and $\delta>0$ a family $\FF \subset {[n] \choose \leq k}$ is $r$-spread and $W$ is an $ m\delta$--random subset of $[n]$, then 
    $$
        \Pr[\exists F \in \FF : F \subset W] \geq 1 - \left(\frac{2}{\log_2(r\delta)}\right )^m k.
    $$
\end{thm}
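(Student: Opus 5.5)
The plan is the standard ``fragment-shrinking'' argument of Alweiss--Lovett--Wu--Zhang in the form streamlined by Stoeckl. First, by monotonicity we may replace the $m\delta$-random set $W$ by a union $W=W_1\cup\dots\cup W_m$ of $m$ independent $\delta$-random subsets of $[n]$. Such a union is stochastically dominated by an $m\delta$-random set, and the event $\{\exists F\in\FF: F\subset W\}$ is monotone increasing, so it suffices to prove the bound for this union.

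The iteration runs as follows. Put $U_i=W_1\cup\dots\cup W_i$. For each $F\in\FF$ choose $F'\in\FF$ minimizing $|F'\setminus U_i|$, and call $F'\setminus U_i$ the \emph{fragment} of $F$ at step $i$. It is cleaner to let the fragment of $F$ be a minimal-size set of the form $F'\setminus U_i$ with $F'\in\FF$ and $F'\subset F\cup U_i$; then fragments of fragments remain of this form. The goal is the one-round estimate: if $\m G$ is a family of sets of size at most $\ell$ arising as fragments of an $r$-spread family, and $W'$ is a fresh $\delta$-random set, then
$$\mathbb E\big[\ell(\text{new fragment})\big]\le \frac{2}{\log_2(r\delta)}\,\ell .$$
Iterating this $m$ times gives that the expected size of the smallest fragment after step $m$ is at most $k\,(2/\log_2(r\delta))^m$. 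A fragment of size $0$ means that some $F\in\FF$ is fully contained in $W$, so Markov's inequality yields the stated probability $1-(2/\log_2(r\delta))^m k$.

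The main obstacle is the one-round estimate, which is proved by an encoding argument. Fix $j$ and count pairs $(W',F)$ for which the new fragment of $F$ still has size at least $j$. Such a pair is encoded by the set $W'\cup F$, which has size at most $|W'|+\ell$, together with a description of which part of $W'\cup F$ is the fragment. The key point is that, by minimality of the chosen $F'$, the fragment $T$ must be such that $\FF(T)$ relative to $W'\cup F$ is small. The spreadness inequality $|\FF(T)|\le r^{-|T|}|\FF|$ then bounds the number of choices of $F'$ given $W'\cup F$ and $T$ by roughly $r^{-|T|}|\FF|$. Comparing the number of sets of size $|W'|+\ell$ with the number of sets of size $|W'|$ gives a factor of about $(1/\delta)^{\ell}$, and combining the two shows that the probability that the fragment keeps size at least $j$ decays like $(2/(r\delta))^{\,\cdot}$ times binomial factors $2^{\ell}$. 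Summing over $j$ gives the factor $2/\log_2(r\delta)$.

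The delicate issue is that fragments themselves need not form an $r$-spread family. Following Stoeckl, I would avoid re-establishing spreadness at each step. Instead, I would always measure against the original $\FF$: fragments are defined as minimal residues $F'\setminus U_i$ with $F'\in\FF$, so that the spreadness of $\FF$ is the only input used at every round. Checking that the minimality condition transfers correctly across rounds is the step I expect to need the most care.
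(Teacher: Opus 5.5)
The paper gives no proof of this statement; it is quoted as a black box from Alweiss--Lovett--Wu--Zhang in Stoeckl's form, so your sketch has to stand on its own. Your reduction to $W=W_1\cup\dots\cup W_m$ and the final Markov step are fine. The gap is that the one-round estimate is never established, and the encoding you describe cannot give it.

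\textbf{The encoding.} Encoding by $W'\cup F$ gives a code set that exceeds $W'$ by up to $\ell$ elements. That costs a factor $\delta^{-\ell}$, while spreadness only repays $r^{-|T|}$. Your own factors therefore give $\Pr[|T|\ge j]\lesssim (2/\delta)^{\ell}r^{-j}$, which is at best a shrinking factor of order $\log(2/\delta)/\log r$, not $2/\log_2(r\delta)$.

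The encoding that works sends $U=W'\cup T$, whose excess over $W'$ is exactly $|T|$. It uses the real consequence of minimality: every $G\in\FF$ with $G\subseteq W'\cup T$ satisfies $G\setminus W'=T$. Indeed, $G\subseteq W'\cup F$ forces $|G\setminus W'|\ge |T|$, while $G\setminus W'\subseteq T$. So $T$ is recovered from $U$ together with a subset of a canonical $G\subseteq U$. Spreadness then bounds the number of original sets $F\supseteq T$ by $r^{-|T|}|\FF|$; it is $F$, not $F'$, that gets counted.

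\textbf{The iteration.} A bound $\mathbb{E}|T|\le \beta\ell$ with $\ell$ the \emph{maximum} size does not compound. After one round the fragments can still have size up to $k$, so the next round again gives $\beta k$, not $\beta^2k$. To reach $\beta^m k$ you need the averaged form $\mathbb{E}|T|\le \beta\,\mathbb{E}|F|$. It must also hold for arbitrary $r$-spread probability measures on sets, because the laws of later fragments are not uniform on a family.

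One way to get this is as follows. Let $S'$ be a conditionally independent copy of $F$ given $F\cup W$. Then $S'\subseteq F\cup W$ and $\mathbb{E}|S'|=\mathbb{E}|F|$, and locating $T=S'\setminus W$ inside $S'$ costs only $\mathbb{E}|F|$ bits. An entropy version of the count above then gives $\mathbb{E}|S'\setminus W|\le \mathbb{E}|F|/\log_2(r\delta)$, and the minimal fragment is no larger than $S'\setminus W$.

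\textbf{Spreadness across rounds.} The ``delicate issue'' you flag is not actually a problem. Since $T(F,W)\subseteq F$ and $F$ is independent of the fresh $W$, the law of the new fragment, conditioned on the past, is again $r$-spread. Your workaround of measuring against the original $\FF$ with cumulative $U_i$ does not make anything compound. With that bookkeeping, the cost of locating $T_i$ is controlled by members of $\FF$, i.e.\ by $k$, in every round.
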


For us, it will be convenient to work with the following modified version of spreadness. Fix an
arbitrary family of sets $\m{A}$ and a real number $\tau>1$. A subfamily $\FF \subset \m{A}$ is called $(\m{A}, \tau)$-homogeneous if for any set $S$ we have 
$$
\frac{|\FF(S)|}{|\FF|}\leq \tau^{|S|}\frac{|\m{A}(S)|}{|\m{A}|}.
$$ 

\begin{obs}\label{obs3} Given a family $\m{A}$, $\tau>1$ and a family $\ff \subset \m{A}$, let $X$ be a maximal set that satisfies $|\ff(X)|\ge\tau^{|X|} \frac{|\m{A}(X)|}{|\m{A}|}|\ff|$. Then $\ff(X)$ is $(\m{A}(X),\tau)$-homogeneous.
\end{obs}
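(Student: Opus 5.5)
We need to prove Observation~\ref{obs3}: if $X$ is a maximal set satisfying $|\ff(X)|\ge \tau^{|X|}\frac{|\m A(X)|}{|\m A|}|\ff|$, then $\ff(X)$ is $(\m A(X),\tau)$-homogeneous. The plan is a direct argument by contradiction using the maximality of $X$. Note first that $X=\emptyset$ satisfies the inequality with equality, so a maximal $X$ exists. Also $\ff(X)$ is nonempty (the inequality forces $|\ff(X)|>0$ as soon as $\m A(X)\neq\emptyset$), so the ratios below make sense; I will handle the degenerate case $\m A(X)=\emptyset$ separately, where $\ff(X)\subset\m A(X)$ is empty as well.

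Suppose $\ff(X)$ is not $(\m A(X),\tau)$-homogeneous. Then there is a set $S$ with
$$\frac{|\ff(X)(S)|}{|\ff(X)|}>\tau^{|S|}\frac{|\m A(X)(S)|}{|\m A(X)|}.$$
We may take $S$ disjoint from $X$: sets of $\ff(X)$ and $\m A(X)$ avoid $X$, so for $S\cap X\neq\emptyset$ both $\ff(X)(S)$ and $\m A(X)(S)$ are empty, and the strict inequality fails. We may also take $S\neq\emptyset$, since for $S=\emptyset$ both sides equal $1$.

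The key identities are $\ff(X)(S)=\ff(X\cup S)$ and $\m A(X)(S)=\m A(X\cup S)$ for $S\cap X=\emptyset$. Multiplying the displayed inequality by $|\ff(X)|$ and using the inequality for $X$ gives
$$|\ff(X\cup S)|>\tau^{|S|}\frac{|\m A(X\cup S)|}{|\m A(X)|}\,|\ff(X)|\ge \tau^{|S|}\frac{|\m A(X\cup S)|}{|\m A(X)|}\cdot\tau^{|X|}\frac{|\m A(X)|}{|\m A|}|\ff| =\tau^{|X\cup S|}\frac{|\m A(X\cup S)|}{|\m A|}|\ff|.$$
So $X\cup S\supsetneq X$ also satisfies the defining inequality, contradicting the maximality of $X$. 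Hence $\ff(X)$ is $(\m A(X),\tau)$-homogeneous.

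The only points needing care are the reduction to $S$ disjoint from $X$ and nonempty, and the nonvanishing of the denominators. These are the main (minor) obstacle; everything else is the telescoping of the $\tau$-exponents and of $|\m A(X)|$. Maximality is meant with respect to inclusion, and this is exactly what the contradiction uses.
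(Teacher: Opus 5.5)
Your argument is correct and is the paper's proof run in the contrapositive. The paper takes every $B\supsetneq X$, bounds $|\ff(B)|$ via maximality, and divides by the defining inequality for $X$, writing this only for $\m A=\Sigma_n$; that is exactly your telescoping. Your extra care about $S\cap X\neq\emptyset$ and about vanishing denominators is welcome. Your argument also works under the sensible reading where maximality is taken among $X$ with $\ff(X)\neq\emptyset$, which is needed because any $Y$ with $\m A(Y)=\emptyset$ satisfies the inequality trivially. This holds because your strict inequality forces $\ff(X\cup S)\neq\emptyset$.
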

\begin{proof}
Indeed, for any $B\supsetneq X$ of size $b$ we have
$$|\ff(B)|\le \tau^b \frac{(n-b)!}{n!}|\ff|\le\tau^{b-|X|} \frac{(n-b)!}{n!} |\ff(X)|.$$
\end{proof}

Also we need the following notion.
Given a family $\m{A}\subset 2^{[n]}$ of sets and $q,r\ge 1$, we say that $\m{A}$ is {\it $(r,q)$-spread} if for each $S\in {[n]\choose \le q}$, the family $\m{A}(S)$ is $r$-spread. Note that putting $S = \emptyset$ implies that $\m{A}$ is $r$-spread, so $(r, q)$-spreadness is a stronger condition than the usual $r$-spreadness.

For any partial permutations $S,X$, such that $S\subset X$, we have $\frac{|\Sigma_n(X)|}{|\Sigma_n(S)|} = \frac {(n-|X|)!}{(n-|S|)!}$, and simple calculus shows that for any $|S|<n/4$ and any $X\supset S$, where $X,S$ are partial permutations, we have
$$\frac{|\Sigma_n(S)|}{|\Sigma_n(X)|} = \frac {(n-|S|)!}{(n-|X|)!}\ge \big((n-|S|)!\big)^{\frac{|X|-|S|}{n-|S|}}\ge \Big(\frac{n-|S|}{e}\Big)^{|X|-|S|}>\Big(\frac n4\Big)^{|X|-|S|}.$$
That is, $\Sigma_n$ is $(\frac{n}{4}, \frac{n}{4})$-spread.

We will apply the following spread approximation result for $\ff_\gamma, \FF_\Delta$. One of the unusual aspects of this result is a highly asymmetric choice of parameters for the two families.  In what follows, we write $\log{n}:=\log_{1.01}{n}$ for shorthand, we put $n_1 := n-1, n_2 := t-1$ and omit integer parts whenever not essential to simplify presentation. 
\begin{thm}\label{thmapprox}
    Consider a family $\FF_1$  of permutations on $[2,n]$ and a family $\ff_2$ of partial permutations on $[2,t]$, such that $\FF_1$ and $\FF_2$ are cross-intersecting. 
    Let $\tau_1 := 1.01, \tau_2:=\frac{k}{\log^2{n}}$ and let $q_1:=10\log{n}, q_2:=\lceil1.1(t-k)\rceil.$ Then there exist families $\m{S}_1, \m S_2$ of partial permutations on  $[2,n]$ and $[2,t]$ of sizes at most $q_1,q_2$, respectively, and families $\FF_i' \subset \FF_i$ for $i\in\{1,2\}$ such that the following holds.
    \begin{enumerate}[label=(\roman*)]
        \item We have $\FF_i \setminus \FF_i'\subset \Sigma_{n_i}[\m S_i]$;
        \item for any $B \in \m{S}_i$ there is a family $\FF_{i, B} \subset \FF_i$ such that $\FF_{i,B}(B)$ is $(\Sigma_{n_i}(B), \tau_i)$-homogeneous;
        \item $|\FF_i'| \leq \tau_i^{-q_i-1}n_i!$.
    \end{enumerate}
\end{thm}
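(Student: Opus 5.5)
The plan is to run the standard ``peeling'' procedure of the spread approximation method separately for each $i\in\{1,2\}$. The cross-intersecting hypothesis is not needed for (i)--(iii); it matters only later, when $\m S_1,\m S_2$ are used. Fix $i$ and write $\m A:=\Sigma_{n_i}$, the permutations on $[2,n]$ for $i=1$ and the $(t-2)$-partial permutations on $[2,t]$ viewed as $\Sigma_{n_2}$ for $i=2$. Set $\m G_0:=\ff_i$ and $\m S_i:=\emptyset$. At step $j$ with $\m G_j\neq\emptyset$, choose an inclusion-maximal set $S_j$ such that
$$|\m G_j(S_j)|\ \ge\ \tau_i^{|S_j|}\,\frac{|\m A(S_j)|}{|\m A|}\,|\m G_j|.$$
Such a set exists because $S=\emptyset$ satisfies this with equality, and there are finitely many candidates. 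By Observation~\ref{obs3}, $\m G_j(S_j)$ is $(\m A(S_j),\tau_i)$-homogeneous.

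If $|S_j|\le q_i$, put $S_j$ into $\m S_i$, set $\ff_{i,S_j}:=\m G_j[S_j]\subset\ff_i$, and let $\m G_{j+1}:=\m G_j\setminus\m G_j[S_j]$. Then $\ff_{i,S_j}(S_j)=\m G_j(S_j)$ has the homogeneity required in (ii), and every removed element lies in $\Sigma_{n_i}[S_j]\subset\Sigma_{n_i}[\m S_i]$. The removed set $\m G_j[S_j]$ is nonempty, since the defining inequality with $|\m G_j|>0$ forces $|\m G_j(S_j)|>0$. So the family strictly shrinks and the process terminates. It stops either when $\m G_j=\emptyset$, or at the first step where the maximal $S_j$ has $|S_j|>q_i$. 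In both cases set $\ff_i':=\m G_j$, so (i) holds by construction.

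For (iii), only the second stopping case needs work. Put $s:=|S_j|\ge q_i+1$. Trivially $|\m G_j(S_j)|\le|\m A(S_j)|$, since $\m G_j(S_j)\subset\m A(S_j)$. Combined with the defining inequality this gives
$$\tau_i^{s}\,\frac{|\m A(S_j)|}{|\m A|}\,|\m G_j|\ \le\ |\m A(S_j)|,\qquad\text{hence}\qquad |\ff_i'|=|\m G_j|\le \tau_i^{-s}|\m A|\le\tau_i^{-q_i-1}n_i!,$$
using $\tau_i>1$. Here $|\m A|=n_i!$ under the identification of $(t-2)$-partial permutations on $[2,t]$ with $\Sigma_{n_2}$ from the paper's conventions; if one counts the partial permutations exactly, $|\m A|\le n_2!$ still holds, which suffices. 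The sizes in $\m S_i$ are at most $q_i$ by construction.

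The main point to be careful about is not the argument, which is short, but bookkeeping. For $i=2$ we need $\tau_2=k/\log^2 n>1$, which holds since $k\ge\sqrt n/100$ in Case~3. We also need $\m A(S)$ to be nonempty and the ratio well defined for partial permutations $S$; for $S$ that is not a partial permutation, $\m G_j(S)=\emptyset$, so such $S$ never satisfy the inequality for nonempty $\m G_j$. Note also that the procedure produces sets of size up to $q_2\approx 1.1(t-k)$, which may exceed $n_2$. In that regime the same inequality simply gives $|\ff_2'|\le\tau_2^{-s}n_2!$ with $s$ the size of the offending set, so nothing breaks. The spreadness of $\Sigma_n$ and the Spread Lemma are not used here; they come into play afterwards, when the homogeneous pieces are exploited together with cross-intersection.
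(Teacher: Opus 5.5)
Your argument is the paper's own proof: the same greedy peeling with an inclusion-maximal $S_j$ satisfying the density-increment inequality, homogeneity of $\m G_j(S_j)$ from maximality (Observation~\ref{obs3}), the same two stopping rules, and the same remainder bound from $|\m G_j(S_j)|\le|\m A(S_j)|$. That core is correct. Two of your bookkeeping remarks, however, are false.

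First, the count for $i=2$. The $(t-2)$-partial permutations of $[2,t]$ are in bijection with the permutations of $[t]$ not fixing $1$, so there are $t!-(t-1)!=(t-1)\cdot n_2!$ of them. So no identification with $\Sigma_{n_2}$ exists, and $|\m A|\le n_2!$ is wrong. The step $|\m G_j(S_j)|\le|\m A(S_j)|$ needs $\m G_j\subset\m A$, so this larger family is the ambient one for $i=2$. The peeling then gives (iii) only as $|\ff_2'|\le\tau_2^{-q_2-1}(t-1)\,n_2!$. Property (ii) survives: $|\m A(X)|=(t-1-|X|)\cdot(t-1-|X|)!$, so the ratios $|\m A(B\cup Y)|/|\m A(B)|$ are no larger than those for $\Sigma_{n_2}$. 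The paper silently makes the same identification $|\Sigma_{n_2}|=n_2!$. The extra factor is harmless where the result is used. In Case~3, $\tau_2^{1.05}=k^{1.05}/\log^{2.1}n\ge 4k\ge t$ for large $n$, hence $\tau_2^{q_2+1}\ge\tau_2^{1.05(t-k+1)}\ge t^{t-k+1}\ge (t-1)\frac{(t-1)!}{(k-1)!}$, and still $|\ff_2'|\le(k-1)!$.

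Second, the sets with $\m A(S)=\emptyset$. Such a set does satisfy the non-strict inequality, since both sides vanish. If such sets were admitted, the unique inclusion-maximal choice would be the whole ground set, and your remainder bound would divide by $|\m A(S_j)|=0$. You have to restrict the search to sets with $\m G_j(S)\neq\emptyset$ by definition; the paper leaves this implicit as well. With that restriction, the rest of your argument goes through.
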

\begin{proof} The first guiding observation to make is that $\FF$ cannot be $\tau$-homogeneous.  
Indeed, the same application of Spread Lemma as we will use later imply that there are $A_1 \in \FF_1, A_2 \in \FF_2$ such that $A_1 \cap A_2=\emptyset$. We avoid calculations here, since we do them below in a more general scenario.  

\begin{enumerate}
    \item Find an inclusion-maximal $S_i^{j}$ that  $|\FF_{i}^j(S_i^{j})| \ge  \tau^{|S_i^{j}|}_i \frac{|\Sigma_{n_i}(S)|}{|\Sigma_{n_i}|}|\FF_i^{j}|$.
    \item If $|S_i^{j}|> q_i$ or $\FF_{i}^j = \emptyset$ then stop. Otherwise, put $\FF_{i}^{j+1}:=\FF_{i}^j\setminus \FF_{i}^{j}[S_i^{j}]$. 
\end{enumerate}
Let us show that $\FF_{i}^j(S_i^{j})$ is $(\Sigma_{n_i}(B), \tau_i)$-homogeneous. Indeed, for any set $X$ disjoint with $S_i^j$ we have
$$|\FF_{i}^j(S_i^{j}\cup X)|< \tau_i^{|S_i^{j}|+|X|} \frac{|\Sigma_{n_i}(S_i^j\cup X)|}{|\Sigma_{n_i}(X)|} |\ff_i^j|\le \tau_i^{|X|} \frac{|\Sigma_{n_i}(S_i^j)|}{|\Sigma_{n_i}|} |\FF_{i}^j (S_i^{j})|,$$
where the first inequality uses the maximality of $S_i^j.$

Let $N_i$ be the step of the procedure for $\FF_i$ at which we stop. The families $\m{S}_i$ are defined as follows: $\m{S}_i:=\{S_i^{1},\ldots, S_i^{N_i-1}\}$. Clearly, $|S_i^{j}|\le q_i$ for each $j\in [N_i-1]$. The families $\FF_{B}$ promised in (ii) are defined to be $\FF_{i}^j[S_i^j]$ for $B=S_i^j$. Next, note that if $\FF_i^{N_i}$ is non-empty, then 
$$
|\FF_{i}^{N_i}|\le \tau_i^{-|S_i^{N_i}|} \frac{|\Sigma_{n_i}|}{|\Sigma_{n_i}(S)|}|\FF_i(S_i^{N_i})|\le \tau_i^{-|S_i^{N_i}|}|\Sigma_{n_i}|.
$$
We put $\FF_i':=\FF_i^{N_i}$. Since either $|S_i^{N_i}|>q_i$ or $\FF_i' = \emptyset $, we have $|\FF_i'| \leq \tau^{-q_i-1}_i \cdot n_i!$.
\end{proof}

Next, we verify that, thanks to the choice of the  parameters, the families $\m{S}_1$ and $\m{S}_2$ are cross-intersecting. 

\begin{lem}\label{leminters}
    The families $\m{S}_1$ and $\m{S}_2$ (from Theorem \ref{thmapprox}) are cross-intersecting. 
\end{lem}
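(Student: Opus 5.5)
We argue by contradiction. Suppose $B_1\in\m S_1$ and $B_2\in\m S_2$ are disjoint. We will then find $A_1\in \FF_1$ and $A_2\in\FF_2$ with $A_1\cap A_2=\emptyset$, which contradicts the cross-intersecting assumption of Theorem~\ref{thmapprox}. By item (ii), there are families $\FF_{1,B_1}\subset\FF_1$ and $\FF_{2,B_2}\subset \FF_2$ such that $\FF_{i,B_i}(B_i)$ is $(\Sigma_{n_i}(B_i),\tau_i)$-homogeneous. It therefore suffices to find $C_1\in \FF_{1,B_1}(B_1)$ and $C_2\in\FF_{2,B_2}(B_2)$ such that $C_1\cup B_1$ and $C_2\cup B_2$ are disjoint.

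\textbf{Step 1: make the families spread.} Combine homogeneity with the spreadness of $\Sigma_m$ computed above. For $i=1$, the family $\Sigma_{n-1}(B_1)$ with $|B_1|\le 10\log n=o(n)$ is roughly $(n/4)$-spread. Hence $\FF_{1,B_1}(B_1)$ is $r_1$-spread with $r_1\approx n/(4\tau_1)=\Theta(n)$. For $i=2$, the ground set has size $t-1\ge k$ (in Case 3 we have $t\le 4k$ and $k\ge \sqrt n/100$). Since $|B_2|\le 1.1(t-k)$, the ambient $\Sigma_{t-1}(B_2)$ is spread with parameter of order $t-|B_2|$. This is where $q_2=\lceil 1.1(t-k)\rceil$ has to stay below $t$ by a constant factor, which I expect requires $t-k$ small relative to $t$, or else using $(n-|S|)/e$ instead of $n/4$. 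We obtain $r_2\approx (t-|B_2|)/(e\tau_2)\gtrsim \log^2 n$ up to constants, because $\tau_2=k/\log^2 n$.

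\textbf{Step 2: random splitting.} Restrict everything to the elements of $[2,n]^2$ not covered by rows or columns of $B_1\cup B_2$. Partition this ground set at random into two parts $W_1,W_2$, with each element placed in $W_2$ with probability $p$ and in $W_1$ with probability $1-p$. The sets $C_2$ have size at most $t$, and the sets $C_1$ have size at most $n$. Apply the Spread Lemma (Theorem~\ref{spreadtheorem}) to $\FF_{2,B_2}(B_2)$ with a small probability $p=m_2\delta_2$, taking $r_2\delta_2$ a large constant and $m_2\approx\log t$. This gives, with probability greater than $1/2$, some $C_2\subset W_2$. For $\FF_{1,B_1}(B_1)$ we need a $(1-p)$-random set, so choose $\delta_1$ with $r_1\delta_1\ge 2^{8}$, i.e.\ $\delta_1=\Theta(1/n)$, and $m_1=\Theta(n)$. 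The failure probability is then at most $(2/8)^{m_1}n\ll 1/2$. The $(r,q)$-spread version is needed here: we must first delete the elements from the rows and columns of $B_2$ and $B_1$ and remove the sets meeting them. This costs at most a factor $\tau_i^{|B_j|}$ times the ambient measure, which is $1.01^{q_2}$ for $i=1$. That cost is acceptable only because $\tau_1=1.01$ is tiny, and this explains the asymmetric choice of parameters. Symmetrically, for $i=2$ the cost is $\tau_2^{q_1}$ weighed against the spreadness $r_2$.

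\textbf{Step 3: conclude.} Both events occur simultaneously with positive probability. Then $C_1\subset W_1$ and $C_2\subset W_2$ are disjoint, and $C_j\cap B_i=\emptyset$, so the resulting sets $A_1$ and $A_2$ are disjoint. This contradicts cross-intersection.

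\textbf{Main obstacle.} The delicate step is the cross-deletion in Step 2. Forcing $\FF_{1,B_1}(B_1)$ to avoid $B_2$ costs up to $|B_2|\cdot\max_x|\FF(x)|/|\FF|\le q_2\tau_1\cdot 4/n$. This is small only if $t-k=o(n)$, which holds since $t\le 4k$ and we may use $t-k\le t\le n$ with $\tau_1^{q_2}$ controlled. Forcing $\FF_{2,B_2}(B_2)$ to avoid $B_1$ costs $q_1\tau_2/(t-|B_2|)\approx 10\log n\cdot k/(\log^2 n\cdot t)=o(1)$. Verifying both of these numerically with the stated $\tau_i$ and $q_i$ is where I expect most of the work.
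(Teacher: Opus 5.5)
Your skeleton matches the paper's: assume disjoint $B_1\in\m S_1$, $B_2\in\m S_2$, discard from $\m G_i=\FF_{i,B_i}(B_i)$ the members meeting $B_{3-i}$, convert homogeneity into spreadness, and finish with a random split, the Spread Lemma and a union bound. The gap is in the cross-deletion for $i=1$, which is the step that carries the lemma; as written it does not go through, for two reasons.

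First, you cannot restrict to the elements outside the rows and columns of $B_1\cup B_2$. Each member of $\m G_1$ is a full permutation of $[2,n]$ with $B_1$ removed, so it has an element in every row not used by $B_1$. As soon as $B_2$ uses such a row, this restriction kills all of $\m G_1$. Only the pairs of $B_2$ have to be avoided. The price is a union bound over $x\in B_2$, not a factor $\tau_1^{|B_2|}$, which would in any case be exponentially large when $|B_2|$ is linear in $n$.

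Second, your bound $q_2\tau_1\cdot 4/n$ on the fraction of $\m G_1$ meeting $B_2$ exceeds $1$ in part of Case 3. For $k\approx n/4$ and $t\approx n$ one has $q_2\approx 0.825n$, so the bound is about $3.3$. Your justification, that $t-k=o(n)$ because $t\le 4k$, is false: $t-k$ can be as large as $\min(3k,n-k)\approx 3n/4$.

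What is needed is the exact ratio instead of the crude $(n/4)$-spreadness. Homogeneity with $S=\{x\}$ gives $|\m G_1[\{x\}]|\le \tau_1|\m G_1|/(n-1-|B_1|)$. Hence the lost fraction is at most $1.01\,q_2/(n-1-10\log n)$. Since $q_2\le 1.1\min(3k,n-k)+1\le 0.825n+1$, this is at most $1.01\cdot 0.825+o(1)<0.84$.

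The key point you are missing is that this fraction need not be $o(1)$; it only has to be bounded away from $1$. Keeping a constant fraction (the paper keeps $\frac{3}{20}$) worsens the homogeneity by a single constant factor, so $\m G_1(\overline{B_2})$ is still $\Theta(n)$-spread. This is exactly where $\tau_1=1.01$ and the factor $1.1$ in $q_2$ are used, together with $t\le 4k$.

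Your estimate for $i=2$ is fine once you note $t-|B_2|-1\ge 1.1k-0.1t-2\ge 0.7k-2$ (from $t\le 4k$). This settles your Step~1 doubt and gives a lost fraction of at most $q_1\tau_2/(0.7k-2)=O(1/\log n)$.

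With both families then spread at scales $\Theta(n)$ and $\Omega(\log^2 n)$, the symmetric $\frac12$-split used in the paper already works. Your asymmetric split is harmless but unnecessary.
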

\begin{proof}
    Assume that there are  $A_1 \in \m{S}_1, A_2 \in \m{S}_2$ such that $A_1 \cap A_2 = \emptyset.$ Recall that $\m{G}_i=\FF_{i, A_i}(A_i)$ are $(\Sigma_{n_i}(B), \tau_i)$-homogeneous. 
    Then
    $$
    |\m{G}_1(\overline{A_2})|\geq |\m{G}_1|-\sum_{x\in A_2\setminus A_1} |\m{G}_1[\{x\}]|\ge \Big(1-\frac {q_2 \tau_1}{n-10\log{n}-1}\Big) |\m{G}_1|\ge \frac{3}{20}|\m{G}_1|,
    $$
    where we used $|\m{G}_1[\{x\}]| \leq \tau_1 \frac{|\Sigma_{n-1}(A_1 \cup \{ x\})|}{|\Sigma_{n-1}(A_1)|}|\m{G}_1| \leq \tau_1 \frac{1}{n-|A_1|-1}|\m{G}_1| \leq \tau_1 \frac{1}{n-10\log{n}-1}|\m{G}_1|$ and $|A_1|\leq q_1 = 10\log{n}, q_2 = \lceil1.1(t-k)\rceil \leq 0.75n$. Analogously, we get that 
$$|\m{G}_2(\overline{A_1})| \geq \Big(1-\frac {q_1 \tau_2}{t-|A_2|-1}\Big) |\m{G}_2|\geq 0.5|\m{G}_2|,
$$ using $|A_2|\leq q_2$, which gives $t-|A_2|-1\geq 1.1k - 0.1t \geq 0.7k$ and $0.7k \geq 2q_1\tau_2\geq2\cdot 10\log{n} \frac{k}{\log^2{n}}$. 
Hence, $\m{G}'_1 :=\m{G}_1(\overline{A_{2}}) \subset {[n]^2\setminus (A_1 \cup A_2) \choose \leq q_1-|A_1|-|A_2|}$ is $(\Sigma_{n-1}, \frac{20}{3}\tau_1)$-homogeneous and $\m{G}'_2:=\m{G}_2(\overline{A_{1}})\subset {[t]^2\setminus (A_1 \cup A_2) \choose \leq q_2-|A_1|-|A_2|}$ is $(\Sigma_{t-1}, 2\tau_2)$-homogeneous, by the last displayed inequality and the trivial inclusion $\m{G}_i(Y, \overline{A_{3-i}}) \subset \m{G}_i(Y)$. This homogeneity implies that for any $Y \subset [n]^2\setminus(A_1\cup A_2)$ we have
$$
\frac{|\m{G}'_1(Y)|}{|\m{G}'_1|} \leq \Big(\frac{20}{3}\tau_1\Big)^{|Y|} \frac{|\Sigma_n(A_1 \cup Y)|}{|\Sigma_n(A_1)|} \leq \Big(\frac{20}{3} \cdot\frac{4\tau_1}{n} \Big)^{|Y|} \leq \Big( \frac{3n}{80\tau_1} \Big)^{-|Y|},
$$
where we used the fact that $\Sigma_n$ is $(\frac{n}{4}, \frac{n}{4})$-spread. Thus, $\m{G}'_1$ is $r_1:=\frac{3n}{80\tau_1}$-spread. Analogously, $\m{G}'_2$ is  $r_2:=\frac{t}{8\tau_2}$-spread.

    We are about to apply the Spread Lemma (Lemma~\ref{spreadtheorem}). For $\m{G}'_1$ we put $m_1 := \log_2{2n}, \delta_1:=\frac{1}{2\log_2{2n}}$, then $m_1\delta_1 = \frac{1}{2}$. For $\m{G}'_2$ we put $m_2 := \log_2{2t}, \delta_2:=\frac{1}{2\log_2{2t}}$, then $m_2\delta_2 = \frac{1}{2}$. Therefore, $r_i\delta_i > 2^{4}$ by our choice of $r_i$ for large $n$.  We randomly color the set $[2,n]^2\setminus(A_1 \cup A_2)$ into $2$ colors. The Spread lemma (modulo simple calculations) implies that for each $i \in \{ 1,2 \}$ a $\frac{1}{2}$-random subset $W$ of $[2,n]^2\setminus(A_1 \cup A_2)$ contains a set from $\m{G}_i$ with probability strictly bigger than $1 - \frac{1}{2}$, i.e., there exists $B_i \in \m{G}_i$  with probability strictly bigger than $1/2$. Using the union bound, we conclude that with positive probability there are such $B_i \in \m{G}_i(\overline{A_{3-i}})$ for each $i \in \{ 1,2 \}$. We get that $(B_1\cup A_1)\cap (B_2\cup A_2)=\emptyset$. This is a contradiction, since $B_i\cup A_i\in \ff_i$.
\end{proof}

Plugging the values of $\tau_i, q_i$ we have the following bounds for $\FF_1', \FF_2'$:
\begin{align*}
& |\FF_1'| \leq 1.01^{-10\log{n}-1}(n-1)! \leq \frac{1}{n^{10}}(n-1)!\leq (n-11)!, \\
& |\FF'_2| \leq \Big(\frac{k}{\log^2{n}}\Big)^{-\lceil 1.1(t-k) \rceil-1}(t-1)!\leq(k-1)!,
\end{align*}
since 
$$
\Big(\frac{k}{\log^2{n}}\Big)^{\lceil 1.1(t-k) \rceil+1} \geq \Big(\frac{k}{\log^2{n}}\Big)^{1.1(t-k)} \geq \Big(\frac{k^{1.1}}{\log^{2.2}{n}}\Big)^{t-k} \geq t^{t-k} \geq \frac{(t-1)!}{(k-1)!},
$$
using $t \leq 4k$ and $k \geq \sqrt{n}/100.$

%When $k +2 \leq t = k + c \leq k + 3k$ we also have
%$$
%|\FF_2'| \leq (\log^{2}{n})^{1.1(t-k)+2}(t-\lceil1.1(t-k)\rceil-1)! \leq (\log^{2}{n})^{1.1(t-k)+2}(1.1k-0.1t)! \leq (k-1)!,
%$$
%that is since 
%$$
%\frac{(\log^{2}{n})^{1.1(t-k)+2}(1.1k-0.1t)!}{(\log^{2}{n})^{1.1(t+1-k)+2}(1.1k-0.1(t+1))!}=\frac{1}{(\log^{2}{n})^{1.1}} \cdot \frac{(k-0.1c)!}{(k-0.1c-0.1)!} \geq 
%$$
%$$\geq \frac{1}{(\log^{2}{n})^{1.1}} \cdot (k - 0.1c)^{0.1} \geq \frac{1}{(\log^{2}{n})^{1.1}} \cdot (0.7k)^{0.1},
%$$
%where we used $k \geq \frac{\sqrt{n}}{100}$ and $c \leq 3k$, hence when $t$ increases the bound on $|\FF'_2|$  can only decrease.

We apply Theorem~\ref{thmapprox}, Lemma~\ref{leminters} and the following calculations to the families $\ff_\Delta \cap \Sigma_{\overline{\alpha_{t+1, n}}}, \ff_\gamma$, playing the roles of $\ff_1,\ff_2$, respectively.
Since $\m{S}_1$ cross-intersects $\m{S}_2$, the family $\widetilde{\FF}_\Delta :=\Big(\FF_\Delta\cap \Sigma_{\overline{\alpha_{t+1, n}}}\Big) \setminus \FF_\Delta' \subset \Sigma[(1,1), \m{S}_1]$ also cross-intersects $\m{S}_2$. We are going to use this to show that $|\widetilde{\FF}_\Delta|$ is small. 
 Note that the approximation $\m S_2$ may be empty, in which case $\widetilde\ff_\Delta$ is empty as well. 

Take $\pi \in \m{S}_2$ of maximal size, and put $|\pi|=\ell$. We may assume that $\pi = \alpha_{t-\ell+1, t}$. Any permutation from  $\widetilde{\FF}_\Delta$ cross-intersects $\pi.$ We also know that for each $(i, i) \in  \alpha_{t-\ell+1, t}$, there is a permutation $\rho_i$ from $\FF_{\gamma}$ that does not contain it. Any permutation from   $\widetilde{\FF}_m$ intersects $\rho_i$ as well. Therefore,   $\widetilde{\FF}_m$ cannot contain any permutation of the form $\Sigma_{\overline{\alpha_{t-\ell+1, n}\setminus\{(i,i)\}}, \overline{\rho_i}}[(1,1), (i,i)]$ for each $i \in  [t-\ell+1, t].$ 
Using Lemma \ref{doubleder}, we have 
\begin{equation}\label{eqgain}
|\Sigma_{\overline{\alpha_{t-\ell+1, n}\setminus\{(i,i)\}}, \overline{\rho_i}}[(1,1), (i,i)]| \geq \frac{(n-2)!}{e^2} - \frac{2(n-3)!}{e^2}.
\end{equation}
Hence, we have
$$
|\widetilde{\FF}_\Delta\cap \Sigma_{\overline{\alpha_{t+1,n}}}[(1, 1)]| \leq |\Sigma_{\alpha_{t-\ell+1, t},\overline{\alpha_{t+1, n}}}[(1,1)]|-\ell\frac{(n-2)!}{e^2} + \ell\frac{2(n-3)!}{e^2}=:X.
$$

Recall that $$
|\Sigma_{\alpha_{t-\ell+1, t},\overline{\alpha_{t+1, n}}}[(1,1)]| = \sum_{i=0}^{t-1} {t-1 \choose i}\frac{(n-i-1)!}{e} - \sum_{i=0}^{t-\ell-1} {t-\ell-1 \choose i}\frac{(n-i-1)!}{e}.
$$

We compare $X$ with the size of $\m{N}(\HH_k)\cap \Sigma_{\overline{\alpha_{t+1,n}}}[(1, 1)]$, given by the formula $$|\m{N}(\HH_k)\cap \Sigma_{\overline{\alpha_{t+1,n}}}[(1, 1)]|=\sum_{i = 0}^{t-1} {t-1 \choose i} \frac{(n-i-1)!}{e} - \sum_{i = 0}^{k-1} {k-1 \choose i}\frac{(n-i-1)!}{e}.$$

First, assume that $\ell \geq t-k$.
We compare the difference between $|\m{N}(\HH_k)\cap \Sigma_{\overline{\alpha_{t+1,n}}}[(1, 1)]|$ and $|\Sigma_{\alpha_{t-\ell+1, t},\overline{\alpha_{t+1, n}}}[(1,1)]|$:
$$
\sum_{i = 0}^{k-1} {k-1 \choose i}\frac{(n-i-1)!}{e} - \sum_{i=0}^{t-\ell-1} {t-\ell-1 \choose i}\frac{(n-i-1)!}{e}
$$
$$
\leq \sum_{i=1}^{k-1} (k-t+l){k-1 \choose i-1}\frac{(n-i-1)!}{e} + O((n-3)!)\leq  (k-t+\ell)\sum_{i=0}^{\infty}\frac{1}{i!} \frac{(n-2)!}{e}= (k-t+\ell) (n-2)!,
$$
where we used ${n \choose k} - {n-m \choose k}\leq m{n-1 \choose k-1},$ valid for any $n\ge k,m$, and in the first inequality we used $t-\ell-1 > (k-1)/2$, meaning that the remaining tail is in the $O((n-3)!)$. Using this calculation, we get
\begin{align*}|\m{N}(\HH_k)\cap \Sigma_{\overline{\alpha_{t+1,n}}}[(1, 1)]|-X &\ge \ell\frac{(n-2)!}{e^2} - \ell\frac{2(n-3)!}{e^2} - (k-t+\ell) (n-2)!\\
&\geq \left (\frac{\ell}{e} - e(k-t+\ell) -O((t-k)/n)\right )\frac{(n-2)!}{e} \\
& \geq \left( \ell(1/e - e) -e(k-t) - O((t-k/n)) \right)\frac{(n-2)!}{e}\\
& \geq \left (\frac{\lceil 1.1(t-k) \rceil}{e} - e\lceil 0.1(t-k)\rceil -O((t-k)/n)\right )\frac{(n-2)!}{e}, 
\end{align*}
where we used $\ell\leq \lceil 1.1(t-k) \rceil $.  This is at least $1.2(n-2)!$ for $t \geq k + 35.$

Second, assume that $\ell \leq t-k-1$. In this case, we get that
\begin{align*}
    |\m{N}(\HH_k)\cap \Sigma_{\overline{\alpha_{t+1,n}}}[(1, 1)]|-X & \leq \sum_{i=0}^{t-\ell-1} {t-\ell-1 \choose i}\frac{(n-i-1)!}{e} - \sum_{i = 0}^{k-1} {k-1 \choose i}\frac{(n-i-1)!}{e} + l\frac{(n-2)!}{e^2}\\
    &  \le (t-k-\ell)\frac{(n-2)!}{e}+\ell\frac{(n-2)!}{e^2}.
    & .
\end{align*}
This is also at least, say, $1.2(n-2)!$ for $t \geq k + 35.$

Hence, for $t \geq k + 35$ we have 
$$
|\m{E}_k|-|\FF | \geq  |\m{N}(\HH_k)\cap \Sigma_{\overline{\alpha_{t+1,n}}}[(1, 1)]|-X -|\ff'_\Delta|-|\ff_\gamma|\ge 1.2(n-2)!-|\ff'_\Delta|-|\FF_\gamma|>0,
$$
since $|\FF_\gamma| \leq (n-2)! - (n-3)!$ and $|\ff'_\Delta|=|\ff'_1| \leq (n-11)!$. The theorem is proved in this case. Note that, we have the following bound on $\FF$:
$$
|\FF|\leq|\m{E}_k|-1.2(n-2)!+|\FF'_\Delta|+|\FF_\gamma|.
$$

Next, we deal with the case $t \leq k + 35$. Consider $\sigma:=\bigcap \m{S}_2, |\sigma|=\ell'$. W.l.o.g. $\sigma = \alpha_{t-\ell'+1, t}$.  Then $\widetilde{\ff}_{\gamma}:=\ff_\gamma\setminus \ff'_\gamma\subset \Sigma [\alpha_{t-\ell'+1, n}]$. At the same time, $|\widetilde{\ff}_\gamma|\ge k!-2(k-1)!>(k-1)!$ by our bound on the remainder. It means that $(t-\ell') > (k-1)$, hence $\ell' \leq t-k$. Consider $$\m{S}_2(\sigma)=\{ \pi \setminus \sigma \mid \pi \in \m{S}_2 \} \subset {[2, t]^2 \setminus \sigma \choose \le 1.1(t-k) - \ell'}.$$ Note that it may be empty.

If $\m S_2(\sigma)$ is non-empty, then for any $x \in [2, t]^2 \setminus \sigma$ there is a partial permutation $\rho_x\in \m S_2(\sigma)$ such that $x \notin \rho_x$.  By the same argument as in the Lemma \ref{hitsets} we have that the number of inclusion-minimal $i$-hitting sets of $\m{S}_2(\sigma)$ is at most $(1.1(t-k)-|\sigma|)^i \leq 38^i$ for $i\geq 2$.  Then $\widetilde{\FF}_\Delta$ cross-intersects $\m{S}_2$ either by cross-intersecting its common part $\sigma$, or by cross-intersecting $\m{S}_2(\sigma)$ . We  bound the former part as in Case 2 by plugging $38$ instead of $t$ in \ref{eqcompare3}: $$38^2\frac{(n-3)!}{e}+o((n-3)!)=O((n-3)!).$$ 
We could do the same calculations as above to bound the latter part, but now plugging the value $\ell'=t-k$ in the bounds above for the case $t \geq k + 35$ (it is easy to see that this value of $\ell'$ gives the worst bound). It actually makes $\m{N}(\HH_k)\cap \Sigma_{\overline{\alpha_{t+1,n}}}[(1, 1)]$ and $\Sigma_{\alpha_{t-\ell'+1, t},\overline{\alpha_{t+1, n}}}[(1,1)]$ coincide, we have
$$
|\FF| \leq |\Sigma_{n, \alpha_{k+1, n}}[(1,1)]|-(t-k)\frac{(n-2)!}{e^2}+(t-k)\frac{2(n-2)!}{e^2}+O((n-3)!) + |\FF'_{\Delta}| + |\FF_\gamma|, 
$$
which gives
$$
|\m{E}_k|-|\FF | \geq  (t-k)\frac{(n-2)!}{e^2}- (t-k)\frac{2(n-3)!}{e^2}-O((n-3)!)+k!-(k-1)!-|\ff'_\Delta|-|\ff'_\gamma|-|\widetilde\ff_\gamma| >0,
$$
when $|\widetilde\FF_\gamma| < \frac{(n-2)!}{100}$.

%Now let us look 

%or $k = n-2.$ \textcolor{red}{our theorem normally does not allow $k=n-2$, only $k=n-3,$ no?}

Hence, the remaining case is $|\widetilde\FF_\gamma| \geq \frac{(n-2)!}{100}$. Since $\widetilde{\ff}_{\gamma}\subset \Sigma [\alpha_{t-\ell'+1, n}]$, we must have 
$t-\ell'+1\ge n-1$.  Since $\m E_k$, $k=2,\ldots, n-3$ is the smallest for $k=n-3,$ we may w.l.o.g. compare $|\ff|$ with $\m E_{n-3}.$
Recall that, using the inclusion-exclusion formula, we have
$$
|\m{E}_{n-3}|=\big(3(n-2)!-3(n-3)!+(n-4)!\big)+(n-3)!-(n-4)!=3(n-2)!-2(n-3)!
$$
We bound $|\FF|$ using \eqref{eqgain} as follows
\begin{align*}
|\FF| &\leq |\Sigma_{\alpha_{t-\ell'+1, n}} [(1,1)]| - |\sigma|\left(\frac{(n-2)!}{e^2}-\frac{(n-3)!}{e^2} \right)+ |\ff'_\Delta|+|\FF_{\gamma}|\\
&\leq \left ((n-t+\ell')(n-2)!-{n-t+\ell' \choose 2}(n-3)!\right)- \ell'\frac{(n-2)!}{e^2} +(n-2)!+ O((n-3)!)\\
&\leq \big(n-t+\ell'+1-e^{-2}\ell'\big)(n-2)!+ O((n-3)!)
\end{align*}
where in the second inequality we  used the inclusion-exclusion formula for $|\Sigma_{\alpha_{t-\ell'+1, n}} [(1,1)]|$, the fact that $n-t+\ell'\le 2$ and the fact that $|\FF_{\gamma}| \leq (n-2)!-(n-3)!$.
Since $n-t+\ell'\le 2$, we have $n-t+1+\ell'(1-e^{-2})\le 3-e^{-2}$  unless $t=n-2$ and $\ell'=0$. Unless the latter happens, we have $|\ff|<|\m E_{n-3}|$ by comparing the coefficients in front of $(n-2)!$ and, third, $\ell' = 0.$ 

We are left with the last case when, first, $t=n-2,$ that is, all permutations in $\ff_\gamma$ have two common elements $(n-1,n-1)$ and $(n,n)$, second, $|\ff_\gamma|\ge (n-2)!/100$. We may actually assume that $$|\ff_\gamma|\ge \frac {99(n-2)!}{100},$$
since otherwise, substituting this improved bound on $|\ff_\gamma|$ in the calculation above, we again get 
$$|\ff| =
\big(n-t+\ell'+99/100-e^{-2}\ell'\big)(n-2)!+ O((n-3)!) = \big(2+99/100\big)(n-2)!+ O((n-3)!) <|\m E_{n-3}|.
$$ We claim that then all permutations from $\ff_\Delta$ must contain either $(n-1,n-1)$ or $(n,n)$. This will conclude the proof, since then $\ff\subset \m E_{n-2},$ and we know that $|\m E_{n-2}| = |\m E_{n-3}|.$ 

Assume that there is a permutation $\pi\in \ff_\Delta$ such that $\pi(i)\ne i$ for $i=n-1,n.$ The number of permutations from $\Sigma[\overline{(1,1)}, \alpha_{n-1,n}]$ that do not intersect $\pi$ is asymptotically the number of derangements on a set of size $n-2$, that is, $(1+o(1))(n-2)!/e$. All these permutations must be excluded from $\ff_\gamma$, which implies that $|\ff_\gamma|\le (1-e^{-1}+o(1))(n-2)!$, a contradiction with the last displayed bound. This concludes the proof of the theorem.

%We have $k \leq n-3$, and thus. Then $t \geq n-2$. In this case $ \frac{(n-2)!}{100} - (n-3)!\leq|\widetilde{\FF}_{\gamma}|\leq|\widetilde{\FF}_{\gamma}[\sigma]|$. However, if $|\sigma|=\ell'\geq t-k$, then $|\widetilde{\FF}_{\gamma}[\sigma]| \leq k!\leq(n-3)!$. So we have $\ell' \leq t-k-1$. Using the inclusion-exclusion formula, we have
%$$
%|\m{E}_k|=\left ((n-k)(n-2)!-{n-k \choose 2}(n-3)!+\dots\right )+k!-(k-1)!,
%$$
%where $n-k \leq 37$, so, in particular, a constant number of terms. 

%Finally, since $t-l\geq  k+1$, we have 
%$$
%|\m{E}_k| -|\FF| \geq \frac{(n-2)!}{e^2} + O((n-3)!).
%$$

 %The theorem is proved.    

\end{document}